\newtheorem{thm}{Theorem}[section]
\newtheorem{theorem}{Theorem}[section]
\newtheorem{prop}[thm]{Proposition}
\newtheorem{lemma}[thm]{Lemma}
\newtheorem{cor}[thm]{Corollary}
\theoremstyle{definition}
\theoremstyle{remark}
\numberwithin{equation}{section}
\newcommand{\R}{\mathbb{R}}  
\newcommand\numberthis{\addtocounter{equation}{1}\tag{\theequation}}
\begin{document}


\title{Simplicity and Finiteness of Discrete Spectrum of the Benjamin-Ono Scattering Operator}


\author{Yilun Wu}
\address{Mathematics Department, Brown University, Providence, RI 02912}
\email{yilunwu@umich.edu}
\urladdr{http://www-personal.umich.edu/~yilunwu/} 





\begin{abstract}
A spectral analysis is done on the $L$ operator of the Lax pair for the Benjamin-Ono equation. Simplicity and finiteness of the discrete spectrum are established as are needed for the Fokas and Ablowitz inverse scattering transform scheme. A crucial step in the simplicity proof is the discovery of a new identity connecting the $L^2$ norm of the eigenvector to its inner product with the scattering potential. The proof for finiteness is an extension of the ideas involved in the Birman-Schwinger bound for Schr\"odinger operators.
\end{abstract}


 \maketitle




\bibliographystyle{acm}

\section{Introduction}

The Benjamin-Ono equation is 
\begin{equation}\label{eq: BO}
u_t +2uu_x - Hu_{xx} = 0,
\end{equation}
where the Hilbert transform $H$ is defined as
\begin{equation}
H\varphi(x) = \text{P.V.}\frac{1}{\pi}\int_{-\infty}^{\infty}\frac{\varphi(y)}{x-y}~dy.
\end{equation}
Notice the opposite sign also appears in the literature when defining the Hilbert transform. 

\eqref{eq: BO} was first formulated by Benjamin \cite{benjamin1967internal} and Ono \cite{ono1975algebraic}. It can be used to model internal gravity waves in a two layer fluid. See also Davis and Acrivos \cite{davis1967solitary}, Choi \cite{choi1999fully} and and Xu \cite{xu2010asymptotic} for more details on the derivation of \eqref{eq: BO}. Applications of \eqref{eq: BO} include internal wave motions supported by oceanic thermoclines and atmospheric waves.

Local and global well-posedness of \eqref{eq: BO} were obtained by Saut \cite{saut1979sur}, I{\'o}rio \cite{jose1986cauchy}, Ponce \cite{ponce1991global}, Koch and Tzvetkov \cite{koch2003local}, Kenig and Koenig \cite{kenig2003local} and Tao \cite{tao2004global}. \eqref{eq: BO} is further found to be a completely integrable equation. The Lax pair of \eqref{eq: BO} was discovered by Nakamura \cite{nakamura1979direct} and Bock and Kruskal \cite{bock1979two}. Fokas and Ablowitz \cite{fokas1983inverse} formulated the direct and inverse scattering transforms for \eqref{eq: BO} and obtained soliton solutions. See also Kaup and Matsuno \cite{kaup1998inverse} and Xu \cite{xu2010asymptotic}. As is proven for other integrable equations, the inverse scattering transform method (IST) can be applied powerfully to all kinds of asymptotic and stability analyses. For instance, there is an extensive literature on long time asymptotics and zero dispersion asymptotics of the Korteweg-de Vries equation by the IST. Here we just mention a few rigorous works. The long time asymptotics of the Korteweg-de Vries equation were obtained by many earlier works and were justified by Shabat \cite{shabat1973korteweg}, Tanaka \cite{tanaka1975korteweg}, Buslaev and Sukhanov \cite{buslaev1986asymptotic}, Deift and Zhou \cite{deift1993steepest} and others. The zero dispersion aysmptotics were established by Lax and Levermore \cite{lax1979zero, lax1983small}, Venakides \cite{venakides1985zero, venakides1990korteweg}, Deift, Venakides and Zhou \cite{deift1997new} and others. In comparison, there is only a limited amount of work on the asymptotic analysis of the Benjamin-Ono equation. In particular, Miller and Xu \cite{miller2011zero} were able to establish a zero dispersion limit for generic positive initial data. The relative scarcity of asymptotic analysis for the Benjamin-Ono equation is partly due to the fact that the IST for the Benjamin-Ono equation with general initial data is still to a large extent formal and needs to be understood more thoroughly. Coifman and Wickerhauser \cite{coifman1990scattering} first did a rigorous analysis of this problem. They were able to justify an IST related to the Fokas and Ablowitz scheme for small potential and obtained small data global well-posedness of \eqref{eq: BO} in that way. Their analysis, however, was very complicated and did not closely follow the Fokas and Ablowitz IST. They also avoided a study of discrete spectrum due to the small data assumption. Up to the present time, a rigorous analysis of the Fokas and Ablowitz IST without a small data assumption is still lacking, and there is no global well-posedness result by IST for large data. As a first step toward that problem, this paper studies the discrete spectrum of the direct scattering problem, and establishes a few key properties that are useful for a construction of the scattering data in the Fokas and Ablowitz IST.

A nice way to look at the Lax pair of \eqref{eq: BO} is that it essentially decomposes with respect to the ranges of $C_{\pm}$ where 
\begin{equation}
C_{\pm}\varphi = \frac{\varphi \pm iH\varphi}{2}
\end{equation}
are the Cauchy projections. When $C_{\pm}$ act on $L^2(\R)$, the ranges are $H^{\pm}$: the Hardy spaces of $L^2$ functions whose Fourier transforms are supported on the positive and negative half lines. In this paper we adopt the following convention for the Fourier and the inverse Fourier transforms:
\begin{equation}F(f)(x)=\hat{f} (\xi) = \int_{\mathbb{R}}e^{-ix\xi}f(x)~dx,\end{equation}
\begin{equation}F^{-1}(f)(x) = \check{f}(x) = \frac{1}{2\pi}\int_{\mathbb{R}}e^{i\xi x}f(\xi)~d\xi.\end{equation}
One then has
\begin{equation}\widehat{C_{\pm}\varphi}= \chi_{\mathbb{R}_{\pm}}\hat{\varphi}.\end{equation}
Notice $C_{\pm}$ acts as identity on $H^{\pm}$ respectively.
The Lax pair is described as follows. On $H^+$, we have
\begin{align}
L_u\varphi &= \frac{1}{i}\varphi_x -C_+(uC_+\varphi)\\
B_u\varphi &= \frac{1}{i}\varphi_{xx} +2[(C_+u_x)(C_+\varphi)-C_+(u_xC_+\varphi)-C_+(uC_+\varphi_x)],
\end{align} 
and on $H^-$, we have
\begin{align}
L_u\varphi &= -\frac{1}{i}\varphi_x -C_-(uC_-\varphi)\\
B_u\varphi &= -\frac{1}{i}\varphi_{xx} +2[(C_-u_x)(C_-\varphi)-C_-(u_xC_-\varphi)-C_-(uC_-\varphi_x)].
\end{align} 
The presentation of the Lax pair here is apparently different from those shown in the literature. To derive this Lax pair, one notes the following formal identity:
\begin{equation}\label{eq: cauchy proj iden}
C_{\pm}(fg) + (C_{\pm}f)(C_{\pm}g)-C_{\pm}(fC_{\pm}g)-C_{\pm}(gC_{\pm}f)=0.
\end{equation}
\eqref{eq: cauchy proj iden} is true for either the ``$+$" sign or the ``$-$" sign, as can be shown by taking the Fourier transform. A consequence of \eqref{eq: cauchy proj iden} is the following:
\begin{equation}\label{eq: cauchy proj iden 2}
C_{\pm}((C_{\pm}f)(C_{\pm}g)) = (C_{\pm}f)(C_{\pm}g).
\end{equation}
Let us first look at the Lax pair on $H^+$. The commutator of $L_u$ with $B_u$ is
\begin{equation}\label{eq: lax pair deri 1}
[L_u,B_u] = \left[\frac{1}{i}\partial_x - C_+uC_+, \frac{1}{i}\partial_x^2 + 2[(C_+u_x)C_+-C_+u_xC_+ -C_+uC_+\partial_x)]\right].
\end{equation}
One can evaluate the various terms in the commutator one by one to get
\begin{equation}\label{eq: lax pair deri 2}
[L_u,B_u] = \frac{2}{i}(C_+u_{xx})C_+ - \frac{1}{i}C_+u_{xx}C_+ - 2[C_+uC_+(C_+u_x)C_+ - (C_+u_x)C_+uC_+ + C_+u_xC_+uC_+].
\end{equation}
If we let $[L_u,B_u]$ act on $\varphi$, the terms in the square brackets in \eqref{eq: lax pair deri 2} reads
\begin{align}
& C_+(uC_+((C_+u_x)(C_+\varphi)))-(C_+u_x)(C_+(uC_+\varphi))+C_+(u_xC_+(uC_+\varphi))\notag\\
= ~&C_+((C_+u_x)(uC_+\varphi))-(C_+u_x)(C_+(uC_+\varphi))+C_+(u_xC_+(uC_+\varphi))\notag\\
= ~&C_+(u_xuC_+\varphi) = C_+uu_xC_+(\varphi),
\end{align}
where we use \eqref{eq: cauchy proj iden 2} in the first step and \eqref{eq: cauchy proj iden}  with $f=u_x$ and $g=uC_+\varphi$ in the second step. Therefore
\begin{equation}
[L_u,B_u] = C_+\left(\frac{1}{i}(2C_+u_{xx}- u_{xx}) -2uu_x\right)C_+ = C_+(Hu_{xx} - 2uu_x)C_+,
\end{equation}
and 
\begin{equation}
\partial_tL_u + [L_u,B_u] = -C_+(u_t+2uu_x-Hu_{xx})C_+.
\end{equation}
The situation on $H^-$ is similar. One gets
\begin{equation}
\partial_tL_u + [L_u,B_u] = -C_-(u_t+2uu_x-Hu_{xx})C_-.
\end{equation}
Therefore \begin{equation}\label{eq: Lax eq} \partial_t L_u + [L_u,B_u] = 0\end{equation} if \eqref{eq: BO} is satisfied. On the other hand, if \eqref{eq: Lax eq} is satisfied, one obtains 
\begin{equation}\label{eq: vanishing f 1}
C_+( f C_+ \varphi )= C_- (f C_-\varphi )= 0
\end{equation}
for every $\varphi$, say, in the Schwartz class, where $f=u_t+2uu_x - Hu_{xx}$. We now show that $f$ must be zero, i.e., \eqref{eq: BO} is satisfied. As a formal argument, we assume that $f$ belongs to the Schwartz class, but the argument can be easily extended to other function spaces by approximation. In fact, the Fourier transform of \eqref{eq: vanishing f 1} gives
\begin{equation}\label{eq: vanishing f 2}
\int_{-\infty}^\xi \hat{f}(\eta)\hat{\varphi}(\xi-\eta)~d\eta = 0, \quad \xi\ge 0,
\end{equation}
and
\begin{equation}\label{eq: vanishing f 3}
\int_{\xi}^{\infty} \hat{f}(\eta)\hat{\varphi}(\xi-\eta)~d\eta = 0, \quad \xi\le 0,
\end{equation}
Take $\xi = 0$ and add up \eqref{eq: vanishing f 2} and \eqref{eq: vanishing f 3} to get
\begin{equation}\label{eq: vanishing f 4}
\int_{\mathbb{R}}\hat{f}(\eta)\hat{\varphi}(-\eta)~d\eta=0.
\end{equation}
Since \eqref{eq: vanishing f 4} holds for every $\varphi$, $f$ must be zero.

The Cauchy projections can be extended to act on larger spaces when necessary. Notice that when $u$ is real, the Lax pair on $H^-$ is just the complex conjugate of the Lax pair on $H^+$. In the following, we will always assume $u$ to be real, and will focus on the $H^+$ part of the Lax pair. The scattering data of the IST are closely related to the spectrum of the operator $L_u = \frac{1}{i}\partial_x - C_+uC_+$. Notice that $\frac{1}{i}\partial_x$ is self-adjoint. Hence $L_u$ can be regarded as a perturbation of $\frac{1}{i}\partial_x$ and is also self-adjoint (see \cite{miller2011zero}):

\begin{theorem}\label{thm: 1}
Suppose $u\in L^2(\R)\cap L^{\infty}(\R)$. $L_u$ is a relatively compact perturbation of $\frac{1}{i}\partial_x$ and is self-adjoint on $H^+$ with domain $H^+\cap H^1(\R)$. Here $H^1(\R)$ is the $L^2$ Sobolev space.
\end{theorem}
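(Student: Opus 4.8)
The plan is to treat $L_u$ as the self-adjoint operator $A := \frac{1}{i}\partial_x$ (restricted to $H^+$) perturbed by the bounded symmetric operator $V := C_+ u C_+$, and to verify both claims through this decomposition. First I would record the mapping properties of the pieces. Since $C_+$ is the orthogonal projection onto $H^+$, it is bounded, self-adjoint, and commutes with the Fourier multiplier $A$; hence $H^+$ is a reducing subspace for $A$, and the restriction $A|_{H^+}$ is self-adjoint with domain $H^+\cap H^1(\R)$. Because $u$ is real-valued and lies in $L^\infty(\R)$, multiplication by $u$ is a bounded self-adjoint operator on $L^2(\R)$ with norm $\|u\|_{L^\infty}$, so $V = C_+ u C_+$ is a bounded symmetric operator of norm at most $\|u\|_{L^\infty}$ that maps $H^+$ into $H^+$.

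For self-adjointness, I would invoke the Kato--Rellich theorem in its simplest form: a bounded symmetric perturbation of a self-adjoint operator is self-adjoint on the unchanged domain. Since $V$ is everywhere defined and bounded, it is $A$-bounded with relative bound $0$, so $L_u = A - V$ is self-adjoint on $\mathrm{dom}(A|_{H^+}) = H^+\cap H^1(\R)$.

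For relative compactness, it suffices to show that $V(A-z)^{-1}$ is compact on $H^+$ for a single $z$ in the resolvent set of $A|_{H^+}$; I would take $z = i$. The resolvent $(A-i)^{-1}$ acts as the Fourier multiplier $\frac{1}{\xi - i}$, and since $\frac{1}{\xi - i}\in L^2(\R)$ (its modulus squared is $(\xi^2+1)^{-1}$, which is integrable) while $u\in L^2(\R)$ by hypothesis, the operator $u(x)\,(A-i)^{-1}$ is of the form $f(x)\,g(\frac{1}{i}\partial_x)$ with $f,g\in L^2(\R)$. Such an operator has integral kernel $u(x)\,\check{g}(x-y)$ and is therefore Hilbert--Schmidt, hence compact, on $L^2(\R)$. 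Pre- and post-composing with the bounded operator $C_+$ preserves compactness, so $V(A-i)^{-1} = C_+ u C_+(A-i)^{-1}$ is compact; restricting to the reducing subspace $H^+$ yields the claim.

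The step requiring the most care is the compactness argument, and specifically the choice $z=i$: for real $z$ the multiplier $\frac{1}{\xi - z}$ fails to be square-integrable near $\xi = z$, so the Hilbert--Schmidt bound would break down; the non-real spectral parameter is exactly what places $g$ in $L^2(\R)$ and lets the classical $f(x)\,g(\frac{1}{i}\partial_x)$ kernel estimate apply. Everything else — the symmetry and boundedness of $V$, the boundedness of $C_+$, and the reduction to the invariant subspace $H^+$ — is routine.
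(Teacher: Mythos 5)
Your argument is correct and follows essentially the same route as the paper: both proofs establish relative compactness by showing that $C_+uC_+(\tfrac{1}{i}\partial_x - i)^{-1}$ is Hilbert--Schmidt (the paper computes the kernel $\chi_{\R^+}(x)\chi_{\R^+}(y)\hat{u}(x-y)/(y-i)$ after conjugating by the Fourier transform, which is unitarily equivalent to your physical-space kernel $u(x)\check{g}(x-y)$), and both conclude self-adjointness on $H^+\cap H^1(\R)$ via Kato--Rellich. Your observation that $V$ is already bounded, so that relative bound $0$ is immediate, is a harmless simplification of the paper's deduction of relative bound $0$ from relative compactness.
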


\cite{miller2011zero} did not mention a proof to this result, so we provide a proof in Section \ref{sec: simp}. As a consequence of Theorem \ref{thm: 1}, by Weyl's theorem for the spectrum of self-adjoint operators, the essential spectrum of $L_u$ is the same as that of $\frac{1}{i}\partial_x$, i.e. $\R^+\cup \{0\}$. By general spectral theory (see \cite{reed1980methods}), the negative discrete spectrum consists of isolated eigenvalues of finite multiplicity. However, it is not clear whether the eigenvalues are simple, or whether there are only finitely many of them. Yet these pieces of information are crucial for the construction of scattering data in the Fokas and Ablowtiz IST. The following Theorem is the main result of this paper:

\begin{theorem}\label{thm: 2}
Suppose $u\in L^1(\R)\cap L^{\infty}(\R)$ and $xu(x)\in L^2(\R)$. The operator $L_u$ has only finitely many negative eigenvalues, and the dimension of each eigenspace is 1.
\end{theorem}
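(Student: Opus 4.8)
The plan is to establish the two assertions separately: simplicity rests on a single algebraic identity, while finiteness is a Birman--Schwinger argument whose only delicate point is the threshold at $0$.

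For simplicity I would argue by contradiction. The functional $\varphi\mapsto\langle\varphi,u\rangle=\int_{\R}\varphi u\,dx$ is linear on $H^+$, so if an eigenvalue $\lambda<0$ had a two-dimensional eigenspace I could choose a nonzero eigenfunction $\varphi$ with $\langle\varphi,u\rangle=0$. The whole proof then reduces to the identity
\[
\|\varphi\|_{L^2}^2=\frac{|\langle\varphi,u\rangle|^2}{2\pi|\lambda|},
\]
valid for every eigenfunction $\varphi$ with eigenvalue $\lambda<0$: granting it, $\langle\varphi,u\rangle=0$ forces $\varphi=0$, so the eigenspace cannot be more than one-dimensional. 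On the Fourier side the eigenvalue equation $\frac{1}{i}\varphi_x-C_+(u\varphi)=\lambda\varphi$ reads $(\xi-\lambda)\hat\varphi(\xi)=\widehat{u\varphi}(\xi)$ for $\xi\ge 0$, and $\langle\varphi,u\rangle=\widehat{u\varphi}(0)$.

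To derive the identity, write $h=\widehat{u\varphi}$, so that $\hat\varphi=h/(\xi-\lambda)$ on $[0,\infty)$ and $2\pi\|\varphi\|^2=\int_0^\infty|h(\xi)|^2(\xi-\lambda)^{-2}\,d\xi$. Integrating by parts, using $(\xi-\lambda)^{-2}=\frac{d}{d\xi}(\xi-\lambda)^{-1}$ together with the decay of $h$ at infinity (here $u\varphi,\ xu\varphi\in L^1$ make $h\in H^1(\R)$), yields the boundary term $|h(0)|^2/|\lambda|=|\langle\varphi,u\rangle|^2/|\lambda|$ plus $\int_0^\infty(|h|^2)'(\xi-\lambda)^{-1}\,d\xi$. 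Since $\overline{h}/(\xi-\lambda)=\overline{\hat\varphi}$, the latter equals $2\,\mathrm{Re}\int_0^\infty\overline{\hat\varphi}\,h'\,d\xi$, and because $h'=\frac{d}{d\xi}\widehat{u\varphi}=-i\,\widehat{xu\varphi}$, Plancherel turns it into $2\,\mathrm{Re}\big(-2\pi i\int_{\R}xu\,|\varphi|^2\,dx\big)=0$, the integral $\int xu\,|\varphi|^2\,dx$ being real. This vanishing of the cross term is the substance of the ``new identity,'' and it is precisely where the hypothesis $xu\in L^2$ enters, guaranteeing $xu\varphi\in L^1$ and legitimizing the differentiation under the Fourier transform.

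For finiteness I would set $\kappa=-\lambda>0$ and invoke the Birman--Schwinger principle: since $T=\frac{1}{i}\partial_x\ge 0$ on $H^+$, the self-adjoint operator $\mathcal K_\kappa=(T+\kappa)^{-1/2}\,C_+uC_+\,(T+\kappa)^{-1/2}$ has, for each $\kappa$, exactly as many eigenvalues above $1$ as $L_u$ has eigenvalues below $-\kappa$. In the Fourier representation $\mathcal K_\kappa$ has kernel $\frac{1}{2\pi}\hat u(\xi-\eta)(\xi+\kappa)^{-1/2}(\eta+\kappa)^{-1/2}$ on $[0,\infty)^2$, whose Hilbert--Schmidt norm is bounded by $(2\pi\kappa)^{-1}\|u\|_{L^1}\|u\|_{L^\infty}$; thus the count is finite for every $\kappa$ and bounded for $\kappa$ bounded away from $0$. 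As the number of eigenvalues below $-\kappa$ increases to the total number of negative eigenvalues as $\kappa\downarrow 0$, finiteness is equivalent to bounding the eigenvalue count of $\mathcal K_\kappa$ above $1$ \emph{uniformly} in $\kappa$. The identity above moreover gives a clean a priori bound on the eigenvalues themselves: for the orthonormal eigenfunctions $\varphi_j$ (orthonormal by the simplicity just proved), Bessel's inequality applied to $\langle\varphi_j,u\rangle=\langle\varphi_j,C_+u\rangle$ gives $\sum_j 2\pi|\lambda_j|=\sum_j|\langle\varphi_j,u\rangle|^2\le\|C_+u\|_{L^2}^2\le\|u\|_{L^2}^2$.

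The main obstacle is the threshold $\kappa\to 0^+$, where $\mathcal K_\kappa$ ceases to be Hilbert--Schmidt: near $\xi=\eta=0$ the value $\hat u(\xi-\eta)\to\hat u(0)=\int u\,dx$ multiplies $(\xi+\kappa)^{-1/2}(\eta+\kappa)^{-1/2}$ and produces a Hilbert--Schmidt norm that diverges like $\log(1/\kappa)$, the signature of a possible zero-energy resonance. To control this I would split off the rank-one operator with kernel $\frac{1}{2\pi}\hat u(0)(\xi+\kappa)^{-1/2}(\eta+\kappa)^{-1/2}$, which contributes at most one eigenvalue above any positive level, and estimate the remainder through the H\"older bound $|\hat u(\xi-\eta)-\hat u(0)|\le C|\xi-\eta|^{1/2}$ coming from $\hat u\in H^1(\R)$ --- once again the content of $xu\in L^2$. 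A Ky Fan inequality would then bound the number of eigenvalues of $\mathcal K_\kappa$ above $1$ uniformly in $\kappa$, ruling out accumulation of eigenvalues at $0$; combined with $\sum_j|\lambda_j|<\infty$ this forces finitely many negative eigenvalues. Upgrading the remainder estimate to genuine uniformity down to $\kappa=0$ --- rather than the $O(\log(1/\kappa))$ of the crude Hilbert--Schmidt bound --- is the technical heart of the argument and the step where the low-frequency decomposition and the moment hypothesis must be used most carefully.
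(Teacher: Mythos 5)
Your simplicity argument is correct, and the identity at its core is exactly the paper's Lemma \ref{lem: interesting indentity}, proved by essentially the same device: integration by parts of $\int_0^\infty|h(\xi)|^2(\xi-\lambda)^{-2}\,d\xi$ on the Fourier side, with the cross term killed by the reality of $\int xu|\varphi|^2\,dx$ (this is indeed where $xu\in L^2$ enters). Where you genuinely depart from the paper is in how simplicity is extracted from the identity: you observe that the linear functional $\varphi\mapsto\int u\varphi\,dx$ cannot be nonvanishing on every nonzero vector of an eigenspace of dimension at least $2$. That is a legitimate and markedly shorter deduction. The paper instead expands the Jost solution $W(\zeta)=1+G_\zeta*(uW)$ near $\zeta=\lambda$ via the resolvent, arrives at $V-G_\lambda*(uV)=(1-N)G_\lambda*u$, and derives a contradiction from $\mathrm{Ran}(I-T)\perp\mathrm{Ker}(I-T^*)$; the payoff of that longer route is the Corollary identifying the phase constant $\gamma$ (a piece of scattering data), which your shortcut does not produce but which the theorem does not require.

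The finiteness half has a genuine gap, and it sits exactly where you place ``the technical heart.'' Two concrete problems with your decomposition of $\mathcal K_\kappa$. First, the proposed rank-one piece with kernel $\frac{1}{2\pi}\hat u(0)(\xi+\kappa)^{-1/2}(\eta+\kappa)^{-1/2}$ is built from the vector $(\xi+\kappa)^{-1/2}$, which is not in $L^2(0,\infty)$; this is not a bounded operator, so the splitting is ill-posed without a low-frequency cutoff. Second, and more seriously, even after inserting a cutoff the remainder is not uniformly Hilbert--Schmidt: the divergence of $\|\mathcal K_\kappa\|_{HS}$ as $\kappa\to 0$ does not come only from the corner $\xi=\eta=0$ but from the whole axes $\{\xi=0\}\cup\{\eta=0\}$. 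For instance, on the region $0<\eta<1$, $2<\xi<3$ one has $\int\!\!\int|\hat u(\xi-\eta)|^2(\xi+\kappa)^{-1}(\eta+\kappa)^{-1}d\xi\,d\eta\ge c\int_0^1(\eta+\kappa)^{-1}d\eta\sim c\log(1/\kappa)$ whenever $|\hat u|$ has a positive lower bound on $[1,3]$, and no modification of the kernel near the corner --- in particular neither the subtraction of $\hat u(0)$ nor the H\"older bound $|\hat u(\xi-\eta)-\hat u(0)|\le C|\xi-\eta|^{1/2}$ --- removes this contribution. So the plan ``rank-one subtraction $+$ Ky Fan $+$ uniform Hilbert--Schmidt bound'' cannot be completed in the Fourier-side symmetrized representation.

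The paper's way around this is to perform the splitting on the convolution kernel in physical space: $G_{-E}(x)=N_{-E}(x)+R(E)$, where $R(E)=\int_0^\infty\chi(\xi)(\xi+E)^{-1}d\xi$ is a divergent scalar and $N_{-E}$ obeys the pointwise bounds $|N_0(x)|\le C(1+|\log|x||)$ and $|N_{-E}(x)-N_0(x)|\le C(E+E^\epsilon(1+|x|^\epsilon))$. The divergent part of $K_{-E}=\sqrt u\,G_{-E}*(\sqrt u\,\cdot)$ is then the honest rank-one operator $R(E)\sqrt u(\sqrt u,\cdot)$ with $\sqrt u\in L^2$, and the hypotheses $u, xu\in L^2$ make $\sqrt u\,N_{-E}*(\sqrt u\,\cdot)$ uniformly Hilbert--Schmidt (Lemma \ref{lem: HS bound}); this is the estimate your sketch is missing. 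Your counting step is otherwise sound --- the remark that a rank-one perturbation changes the number of eigenvalues above $1$ by at most one is in fact slightly cleaner than the paper's identification of the large eigenvalue through $\mathrm{det}_2$ and a fixed-point argument --- and the two smaller omissions (reducing to $u\ge 0$ before invoking Birman--Schwinger, since $C_+uC_+$ has no definite sign, and the non-crossing of eigenvalue branches needed to make the count exact) are handled in the paper by Lemma \ref{lem: montonicity} together with simplicity. Your Bessel-inequality bound $\sum_j 2\pi|\lambda_j|\le\|u\|_2^2$ is a nice observation but, as you note, does not by itself exclude accumulation of eigenvalues at $0$.
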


Finiteness is proven in Section \ref{sec: fin}, and simplicity is proven in Section \ref{sec: simp}. We make a few remarks on the conditions on $u$. The first condition on $u$ implies $u$ belongs to every $L^p$ space, for $1\le p \le \infty$. Furthermore, we could change the $L^1$ condition to $L^2$ and would have made no difference. In fact, if $u\in L^2(\R)$ and $xu(x)\in L^2(\R)$, then
\begin{equation}\int_{\R}|u(x)|~dx = \int_{\R}\sqrt{1+|x|^2}|u(x)|\frac{1}{\sqrt{1+|x|^2}}~dx\le C\left(\int_{\R}|(1+|x|^2)||u(x)|^2~dx\right)^{1/2}<\infty.\end{equation}

The next step in the spectral analysis of $L_u$ will be a study of the essential spectrum $[0, \infty )$. For $\lambda$ in this region one wants to be able to solve certain integral equations related to the eigenvalue problem, and it will be of interest to show that the kernel of $L_u-\lambda$ is trivial. In other words, there is no embedded eigenvalues in the essential spectrum. This is a task yet to be accomplished.

\section{Simplicity of the Discrete Spectrum}\label{sec: simp}

In this section, we establish some basic properties of $L_u$ and then prove simplicity of its negative eigenvalues. All $L^p$ spaces  in the following are understood to have domain $\R$ if their domains are unspecified. 
We first show Theorem \ref{thm: 1}. 

\begin{proof}[Proof of Theorem \ref{thm: 1}]
We just need to show that \begin{equation}C_+uC_+ (-i\partial_x-i)^{-1}: H^+ \to H^+\end{equation} is compact. Let us conjugate this operator by the Fourier transform to get \begin{equation}FC_+ u C_+ (-i\partial_x-i)^{-1}F^{-1}: L^2(0,\infty) \to L^2(0,\infty).\end{equation} A simple calculation shows that this is an integral operator with kernel \begin{equation}K(x,y)=\chi_{\R^+}(x)\chi_{\R^+}(y)\frac{\hat{u}(x-y)}{y-i}.\end{equation}
Since \begin{equation}\int_{\R^2}\chi_{\R^+}(x)\chi_{\R^+}(y)\frac{|\hat{u}(x-y)|^2}{|y-i|^2}~dx~dy\le C\|u\|_2^2,\end{equation}
the operator in question is Hilbert-Schmidt, hence is compact. Therefore $C_+uC_+$ is $-i\partial_x$-compact, hence $-i\partial_x$-bounded with relative bound 0. By the Kato-Rellich theorem (see \cite{reed1975fourier}, Theorem X.12), $L_u$ is self adjoint on $H^+\cap H^1$, because $C_+uC_+$ is symmetric.
\end{proof}

\begin{lemma}\label{lem: semi-boundedness}
Suppose $u\in L^2 \cap L^{\infty}$. For any $\varphi \in D(L_u)$, \begin{equation}(\varphi, L_u\varphi)\ge -\|u\|_{\infty}\|\varphi\|_2^2.\end{equation} Here $D(L_u)$ is the domain of $L_u$.
\end{lemma}
\begin{proof}
By taking the Fourier transform, one easily sees that
\begin{equation}(\varphi,L_u\varphi)\ge -(C_+uC_+\varphi,\varphi),\end{equation}
whereas
\begin{equation}|(C_+uC_+\varphi,\varphi)|=|(uC_+\varphi,C_+\varphi)|\le \|u\|_{\infty}\|\varphi\|_2^2.\end{equation}
\end{proof}

From Lemma \ref{lem: semi-boundedness} we see that the negative eigenvalues are bounded from below. In fact, all of them are no less than $-\|u\|_{\infty}$.

The following Lemma allows us to convert between the differential and the integral form of the eigenvalue equation. It provides some details to the equations that appeared in \cite{fokas1983inverse}. The convolution kernel $G_{\lambda}$ in the integral equations are given by 
\begin{equation}
G_{\lambda}(x) = \frac{1}{2\pi}\int_0^{\infty}\frac{e^{ix\xi}}{\xi-\lambda}~d\xi.
\end{equation}

\begin{lemma}[Equivalence of differential and integral equations]\label{lem: equiv diff int eq}
Suppose $u\in L^2\cap L^{\infty}$. 
\begin{enumerate}
\item 
If $\varphi\in H^+\cap H^1$, $\lambda<0$, then
\begin{equation}\label{eq: e-value diff eq}
\frac{1}{i}\partial_x\varphi - C_+(u\varphi) = \lambda \varphi
\end{equation}
if and only if
\begin{equation}\label{eq: e-value int eq}
\varphi = G_{\lambda}*(u\varphi).
\end{equation}
\item 
If $\varphi-1\in H^+\cap H^1$, $\lambda<0$, then
\begin{equation}
\frac{1}{i}\partial_x(\varphi-1) - C_+(u(\varphi-1)) = \lambda (\varphi-1)+C_+u
\end{equation}
if and only if
\begin{equation}
\varphi = 1+G_{\lambda}*(u\varphi).
\end{equation}
\end{enumerate}
Any $\varphi$ satisfying the above equations is bounded and continuous.
\end{lemma}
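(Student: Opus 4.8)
The plan is to pass to the Fourier side, where both equations collapse to pointwise algebraic relations on the half-line $[0,\infty)$ on which the transforms of $H^+$ functions are supported. The first preliminary step is to identify the transform of the convolution kernel. Reading the definition of $G_\lambda$ as an inverse Fourier transform, one has $G_\lambda=F^{-1}\left(\frac{\chi_{\R^+}(\xi)}{\xi-\lambda}\right)$; since $\lambda<0$ the function $\xi\mapsto\chi_{\R^+}(\xi)/(\xi-\lambda)$ is bounded by $1/|\lambda|$ and square-integrable (its $L^2$ norm squared is $1/|\lambda|$), so $G_\lambda\in L^2$ and $\hat{G_\lambda}(\xi)=\chi_{\R^+}(\xi)/(\xi-\lambda)$. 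I will also record at the outset that $u\varphi\in L^2$: as $\varphi\in H^1(\R)$ it is bounded by Sobolev embedding, so $u\varphi\in L^2$ because $u\in L^2$, and the convolution $G_\lambda*(u\varphi)$ of two $L^2$ functions is well defined.

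For part (1), I would Fourier-transform the differential equation \eqref{eq: e-value diff eq}. Using $F(\frac{1}{i}\partial_x\varphi)(\xi)=\xi\hat\varphi(\xi)$ and $F(C_+(u\varphi))=\chi_{\R^+}\widehat{u\varphi}$, it becomes $\xi\hat\varphi-\chi_{\R^+}\widehat{u\varphi}=\lambda\hat\varphi$. Because $\varphi\in H^+$, the transform $\hat\varphi$ is supported in $[0,\infty)$, so the full content of this identity lives on $\xi\ge0$, where it reads $(\xi-\lambda)\hat\varphi(\xi)=\widehat{u\varphi}(\xi)$. Since $\lambda<0$ and $\xi\ge0$ give $\xi-\lambda>0$, I can divide to get $\hat\varphi(\xi)=\widehat{u\varphi}(\xi)/(\xi-\lambda)$ for $\xi\ge0$, which together with $\hat\varphi=0$ on $(-\infty,0)$ is precisely $\hat\varphi=\hat{G_\lambda}\,\widehat{u\varphi}=F(G_\lambda*(u\varphi))$; inverting yields \eqref{eq: e-value int eq}. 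The converse runs the same steps backward: from \eqref{eq: e-value int eq} the relation $\hat\varphi=\chi_{\R^+}\widehat{u\varphi}/(\xi-\lambda)$, multiplied by $(\xi-\lambda)$ on $[0,\infty)$, recovers the transformed differential equation.

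For part (2) I would set $\psi=\varphi-1\in H^+\cap H^1$ and Fourier-transform the stated equation. On $\xi\ge0$ it becomes $(\xi-\lambda)\hat\psi=\widehat{u\psi}+\hat u$, and the key algebraic simplification is $\widehat{u\psi}+\hat u=\widehat{u(\psi+1)}=\widehat{u\varphi}$, so that $(\xi-\lambda)\hat\psi=\widehat{u\varphi}$ on $[0,\infty)$. Dividing as before gives $\hat\psi=\hat{G_\lambda}\,\widehat{u\varphi}$, i.e. $\varphi-1=G_\lambda*(u\varphi)$, and the converse again reverses the computation.

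Finally, the boundedness and continuity follow directly from the integral form: in every case $\hat\varphi$ (or $\hat\psi$) equals $\hat{G_\lambda}\,\widehat{u\varphi}$, a product of two $L^2$ functions and hence an element of $L^1(\R)$, whose inverse Fourier transform is bounded and continuous. I expect the only genuinely delicate points to be bookkeeping ones: justifying the convolution theorem for the $L^2$-class objects involved (most cleanly by defining $G_\lambda*(u\varphi)$ as $F^{-1}(\hat{G_\lambda}\widehat{u\varphi})$ and checking it agrees with the pointwise convolution), and confirming that multiplying or dividing by $\xi-\lambda$ preserves the relevant spaces — here the boundedness of $\xi/(\xi-\lambda)$ on $[0,\infty)$ guarantees $\xi\hat\varphi\in L^2$, so the manipulations stay inside $H^1$ and no regularity is lost.
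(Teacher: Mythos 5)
Your proof is correct and follows essentially the same route as the paper's: Fourier transform the equation, reduce it to the algebraic relation $(\xi-\lambda)\hat\varphi=\widehat{u\varphi}$ on $[0,\infty)$, divide using $\chi_{\mathbb{R}^+}(\xi)/(\xi-\lambda)\in L^2$, and invert. The only cosmetic differences are that you write out part (2) (the paper dismisses it as ``similar'') and you obtain boundedness and continuity from the $L^1$ product $\hat{G_\lambda}\widehat{u\varphi}$ rather than from the Sobolev embedding $H^1(\mathbb{R})\subset C^{0,\frac{1}{2}}(\mathbb{R})$; both are valid.
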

\begin{proof}
We just show the equivalence of \eqref{eq: e-value diff eq} and \eqref{eq: e-value int eq}. The other case is similar. Let us Fourier transform \eqref{eq: e-value diff eq}:
\begin{equation}\xi\hat{\varphi}(\xi)-\chi_{\R^+}(\xi)\widehat{u\varphi}(\xi)=\lambda \hat{\varphi}(\xi),\end{equation}
\begin{equation}\hat{\varphi} = \frac{\chi_{\R^+}(\xi)}{\xi-\lambda}\widehat{u\varphi}(\xi).\end{equation}
Now inverse Fourier transform to get \eqref{eq: e-value int eq}. One just needs to notice that $\frac{\chi_{\R^+}(\xi)}{\xi-\lambda}\in L^2$ to make the argument rigorous. Finally, by the Sobolev embedding theorem, $H^1(\R)\subset C^{0,\frac{1}{2}}(\R)$. Hence $\varphi$ is bounded and continuous.
\end{proof}

\begin{lemma}
If $\lambda<0$, the Hilbert-Schmidt norm of $G_{\lambda}*(u~\cdot): H^+\to H^+$ is $\frac{\|u\|_2}{\sqrt{|\lambda|}}$.
\end{lemma}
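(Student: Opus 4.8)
The plan is to recognize $G_{\lambda}*(u\,\cdot)$ as an integral operator and read its Hilbert--Schmidt norm straight off the kernel. Writing the convolution explicitly, the operator sends $\varphi$ to
\begin{equation}
(G_{\lambda}*(u\varphi))(x) = \int_{\R}G_{\lambda}(x-y)\,u(y)\,\varphi(y)\,dy,
\end{equation}
so its integral kernel is $\mathcal{K}(x,y)=G_{\lambda}(x-y)u(y)$. Because $\widehat{G_{\lambda}}$ is supported on $\R^{+}$, the output automatically lands in $H^{+}$, so no projection on the range is required and the Hilbert--Schmidt norm can be computed directly from $\mathcal{K}$ (restricting the domain to $H^{+}$ does not increase it, so the kernel computation yields the relevant value).

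The first substantive step is that the dependence on $u$ and on $G_{\lambda}$ decouples, because $G_{\lambda}$ enters only through a translation. Using Tonelli (all integrands are non-negative) and the translation invariance of Lebesgue measure,
\begin{equation}
\int_{\R}\int_{\R}|\mathcal{K}(x,y)|^{2}\,dx\,dy = \int_{\R}|u(y)|^{2}\left(\int_{\R}|G_{\lambda}(x-y)|^{2}\,dx\right)dy = \|G_{\lambda}\|_{2}^{2}\,\|u\|_{2}^{2},
\end{equation}
since the inner integral equals $\|G_{\lambda}\|_{2}^{2}$ independently of $y$. Thus the whole problem reduces to evaluating $\|G_{\lambda}\|_{2}$.

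The heart of the computation is to obtain $\|G_{\lambda}\|_{2}$ by Plancherel. From the same Fourier calculation used in Lemma \ref{lem: equiv diff int eq} one has $G_{\lambda}=F^{-1}\!\big(\chi_{\R^{+}}(\xi)/(\xi-\lambda)\big)$, so that
\begin{equation}
\|G_{\lambda}\|_{2}^{2} = \left\|\frac{\chi_{\R^{+}}(\xi)}{\xi-\lambda}\right\|_{2}^{2} = \int_{0}^{\infty}\frac{d\xi}{(\xi-\lambda)^{2}} = \left[-\frac{1}{\xi-\lambda}\right]_{0}^{\infty} = \frac{1}{|\lambda|},
\end{equation}
where the hypothesis $\lambda<0$ is exactly what guarantees that the integrand has no singularity on $[0,\infty)$ and that the integral converges to $-1/\lambda=1/|\lambda|$. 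Combining this with the factorization above gives Hilbert--Schmidt norm $\|u\|_{2}/\sqrt{|\lambda|}$.

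The argument is essentially mechanical, so the only genuine points to watch are bookkeeping rather than conceptual. First, the Plancherel constant must be taken consistently with the Fourier convention fixed in the introduction so that the factor in front of $1/\sqrt{|\lambda|}$ comes out exactly as stated. Second, one should record that the sign condition $\lambda<0$ is used twice — implicitly in writing $G_{\lambda}$ as the inverse transform of $\chi_{\R^{+}}/(\xi-\lambda)$, and explicitly in the convergence of the final elementary integral. I expect the normalization check to be the main (and mild) obstacle; the decoupling in the second display and the integral in the third are routine.
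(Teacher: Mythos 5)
Your proof is correct and follows exactly the paper's own argument: identify the kernel $\mathcal{K}(x,y)=G_{\lambda}(x-y)u(y)$, factor out $\|u\|_2^2$ by translation invariance, and evaluate $\|G_{\lambda}\|_2^2$ via Plancherel as $\int_0^{\infty}(\xi-\lambda)^{-2}\,d\xi = 1/|\lambda|$. The one point you flagged as the ``main obstacle'' is in fact where both you and the paper are slightly off: with the Fourier normalization fixed in the introduction, Plancherel reads $\|f\|_2^2=\frac{1}{2\pi}\|\hat{f}\|_2^2$, so $\|G_{\lambda}\|_2^2=\frac{1}{2\pi|\lambda|}$ and the Hilbert--Schmidt norm is really $\frac{\|u\|_2}{\sqrt{2\pi|\lambda|}}$ --- a harmless discrepancy, since the lemma is only used to conclude that $G_{\lambda}*(u~\cdot)$ is Hilbert--Schmidt and hence compact.
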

\begin{proof}
The $L^2$ norm squared of the kernel can be computed as follows:
\begin{align}
& \int_{\R^2}|G_\lambda(x-y)|^2 |u(y)|^2~dx~dy \notag \\
= &\|u\|_2^2\int_{\R}|G_\lambda(x)|^2~dx \notag\\
= &\|u\|_2^2\int_0^{\infty}\frac{1}{(\xi-\lambda)^2}~d\xi \notag\\
= &\frac{\|u\|_2^2}{ |\lambda|}.
\end{align}
\end{proof}


\begin{lemma}\label{lem: asymp}
If $u\varphi\in L^{1}\cap L^2$ and $xu(x)\varphi(x)\in L^2$, \begin{equation}G_{\lambda}*(u\varphi)(x) = \frac{1}{2\pi i \lambda x}\int_{\mathbb{R}}u(y)\varphi(y)~dy + \frac{R(\lambda,x)}{x} \quad \text{for } x\ne 0,\end{equation} where $R(\lambda,x)\in L^2$ for any given $\lambda<0$.
\end{lemma}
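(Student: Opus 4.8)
The plan is to move to the Fourier side, turn the convolution into an explicit oscillatory integral, and read off the $1/x$ behaviour through a single integration by parts in frequency. Write $h = u\varphi$. Since $\widehat{G_\lambda}(\xi) = \chi_{\R^+}(\xi)/(\xi-\lambda)$ — the symbol already used in the proof of Lemma~\ref{lem: equiv diff int eq} — the convolution theorem gives
\begin{equation}
G_\lambda * (u\varphi)(x) = \frac{1}{2\pi}\int_0^\infty \frac{e^{ix\xi}}{\xi-\lambda}\,\hat h(\xi)\,d\xi .
\end{equation}
The three hypotheses on $h$ are exactly what is needed to control $\hat h$: $h\in L^1$ makes $\hat h$ continuous with $\hat h(0) = \int_{\R} u\varphi\,dy$; $h\in L^2$ gives $\hat h\in L^2$; and $xh\in L^2$ gives $\hat h' \in L^2$. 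Hence $\hat h \in H^1(\R)$.

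Next I would set $g(\xi) = \hat h(\xi)/(\xi-\lambda)$ on $(0,\infty)$. Because $\lambda<0$, on this half-line $\xi-\lambda \ge |\lambda| > 0$, so $1/(\xi-\lambda)$ and $1/(\xi-\lambda)^2$ are bounded; together with $\hat h, \hat h' \in L^2$ this yields $g \in H^1(0,\infty)$, with
\begin{equation}
g'(\xi) = \frac{\hat h'(\xi)}{\xi-\lambda} - \frac{\hat h(\xi)}{(\xi-\lambda)^2} \in L^2(0,\infty).
\end{equation}
As an $H^1$ function of a half-line, $g$ has a boundary value $g(0^+) = \hat h(0)/(-\lambda)$ and tends to $0$ at $+\infty$, which is precisely the regularity needed to integrate by parts.

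Integrating $\int_0^\infty e^{ix\xi} g(\xi)\,d\xi$ by parts, the boundary term at $+\infty$ vanishes while the term at $0$ contributes $-g(0^+)/(ix)$, so
\begin{equation}
G_\lambda * (u\varphi)(x) = \frac{\hat h(0)}{2\pi i \lambda x} - \frac{1}{2\pi i x}\int_0^\infty e^{ix\xi} g'(\xi)\,d\xi .
\end{equation}
The first term equals $\frac{1}{2\pi i \lambda x}\int_{\R} u\varphi\,dy$, the claimed leading coefficient. Setting $R(\lambda,x) = -\frac{1}{2\pi i}\int_0^\infty e^{ix\xi} g'(\xi)\,d\xi$, this is a unimodular constant times the inverse Fourier transform of $\chi_{\R^+} g' \in L^2$, so by Plancherel $R(\lambda,\cdot)\in L^2$, which finishes the argument. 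The one delicate point I anticipate is the integration by parts itself: the oscillatory integral is only conditionally convergent, so the vanishing of the boundary contribution at infinity and the validity of the manipulation should be deduced from $g\in H^1(0,\infty)$ — for instance by inserting a cutoff $\chi_{(0,N)}$, integrating by parts in the truncated integral, and passing to the limit $N\to\infty$ — rather than by naive evaluation at the endpoints.
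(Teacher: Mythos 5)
Your proposal is correct and follows essentially the same route as the paper: pass to the Fourier side, note that the hypotheses give $\widehat{u\varphi}\in H^1$ with $\widehat{u\varphi}\to 0$ at infinity, integrate by parts once, and identify the remainder as the inverse Fourier transform of an $L^2$ function via Plancherel. The only cosmetic difference is that you fold the factor $1/(\xi-\lambda)$ into your $g$ before differentiating, while the paper keeps it separate; your cutoff justification of the integration by parts matches the paper's $\lim_{n\to\infty}\int_0^n$ argument.
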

\begin{proof}
\begin{equation}G_{\lambda}(x-y)=\frac{1}{2\pi}\int_0^{\infty} \frac{e^{i(x-y)\xi}}{\xi-\lambda}~d\xi=\lim _{n\to \infty}\frac{1}{2\pi}\int_0^n \frac{e^{i(x-y)\xi}}{\xi-\lambda}~d\xi.\end{equation}
The convergence is in the $L^2$ sense in $y$ for each fixed $x$. Therefore
\begin{align}
&G_{\lambda}*(u\varphi)(x)\notag\\
= &\lim_{n\to \infty}\int_{\R}\frac{1}{2\pi}\int_0^n \frac{e^{i(x-y)\xi}}{\xi-\lambda}~d\xi ~u(y)\varphi(y)~dy\notag\\
= &\frac{1}{2\pi}\lim_{n\to \infty}\int_0^n \frac{e^{ix\xi}}{\xi-\lambda} \int_{\R} e^{-iy\xi}u(y)\varphi(y)~dy~d\xi.
\end{align}
Let $g(\xi) = \int_{\R} e^{-iy\xi}u(y)\varphi(y)~dy$. By the conditions given, we see that $g\in H^1$ and $\lim_{\xi \to \infty}g(\xi)=0$. We then have
\begin{align}
&G_{\lambda}*(u\varphi)(x)\notag\\
= &\frac{1}{2\pi}\left(\frac{1}{ix}e^{ix\xi}\frac{g(\xi)}{\xi-\lambda}\bigg|_0^{\infty}-\lim_{n\to \infty}\int_0^n \frac{e^{ix\xi}}{ix} \left(\frac{g'(\xi)}{\xi-\lambda}-\frac{g(\xi)}{(\xi-\lambda)^2}\right)~d\xi\right)\notag\\
= &\frac{g(0)}{2\pi i \lambda x}-\frac{1}{2\pi ix}\int_0^{\infty} e^{ix\xi}\left(\frac{g'(\xi)}{\xi-\lambda}-\frac{g(\xi)}{(\xi-\lambda)^2}\right)~d\xi.
\end{align}
The use of the fundamental theorem of calculus above can be justified by approximating the $H^1$ function $g$ by smooth functions and applying the Sobolev embedding theorem when taking the limit. It is now sufficient to show that $\frac{g'(\xi)}{\xi-\lambda}-\frac{g(\xi)}{(\xi-\lambda)^2}\in L^2$, but $u\varphi\in L^1$ implies $g\in L^{\infty}$ and $xu(x)\varphi(x)\in L^2$ implies $g'\in L^2$. 
\end{proof}

Equation \eqref{eq: mystery eq} in the following Lemma is an important ingredient in the proof of simplicity of the discrete spectrum.

\begin{lemma}\label{lem: interesting indentity}
Assume $u\in L^2\cap L^{\infty}$ and $xu(x) \in L^2$. If $\lambda<0$ is an eigenvalue of $L_u$, and $\varphi$ is an eigenvector, then
\begin{equation}\label{eq: mystery eq}
\bigg|\int_{\mathbb{R}}u\varphi~dx \bigg|^2= 2\pi|\lambda|\int_{\mathbb{R}}|\varphi|^2~dx.
\end{equation}
\end{lemma}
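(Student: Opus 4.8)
The plan is to transfer the entire identity to the Fourier side, where the integral equation from Lemma~\ref{lem: equiv diff int eq} gives an explicit formula for $\hat\varphi$, and then to extract \eqref{eq: mystery eq} from a single integration by parts. Writing $g(\xi)=\widehat{u\varphi}(\xi)$, the Fourier transform of $\varphi=G_\lambda*(u\varphi)$ (already computed inside the proof of Lemma~\ref{lem: equiv diff int eq}) reads $\hat\varphi(\xi)=\frac{\chi_{\R^+}(\xi)}{\xi-\lambda}\,g(\xi)$, and evaluating $g$ at the origin gives $g(0)=\int_{\R}u\varphi\,dx$. By Plancherel ($\|\hat f\|_2^2=2\pi\|f\|_2^2$ in this convention), $2\pi\int_{\R}|\varphi|^2\,dx=\int_0^\infty\frac{|g(\xi)|^2}{(\xi-\lambda)^2}\,d\xi$, so the claim \eqref{eq: mystery eq} is equivalent to
\begin{equation}
|g(0)|^2=-\lambda\int_0^\infty\frac{|g(\xi)|^2}{(\xi-\lambda)^2}\,d\xi .
\end{equation}

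Next I would integrate by parts using $\frac{1}{(\xi-\lambda)^2}=-\frac{d}{d\xi}\frac{1}{\xi-\lambda}$. Since $\lambda<0$ there is no singularity on $[0,\infty)$, and the facts recorded in the proof of Lemma~\ref{lem: asymp} --- namely $g\in H^1$ with $g(\xi)\to0$ as $\xi\to\infty$ --- make the boundary term at infinity vanish and the one at the origin equal $\frac{|g(0)|^2}{-\lambda}$. This produces
\begin{equation}
\int_0^\infty\frac{|g(\xi)|^2}{(\xi-\lambda)^2}\,d\xi=\frac{|g(0)|^2}{-\lambda}+\int_0^\infty\frac{(|g|^2)'(\xi)}{\xi-\lambda}\,d\xi ,
\end{equation}
so everything reduces to showing that the remaining integral vanishes.

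The crux is then to observe that this leftover integral is purely imaginary. Writing $(|g|^2)'=2\,\mathrm{Re}(\overline{g}\,g')$ and using $\frac{\overline{g(\xi)}}{\xi-\lambda}=\overline{\hat\varphi(\xi)}$, I get $\int_0^\infty\frac{(|g|^2)'}{\xi-\lambda}\,d\xi=2\,\mathrm{Re}\int_0^\infty g'(\xi)\,\overline{\hat\varphi(\xi)}\,d\xi$. Because $\hat\varphi$ is supported on $[0,\infty)$ the integral extends to all of $\R$, and since $u\varphi\in L^1\cap L^2$ with $xu\varphi\in L^2$ (here the hypothesis $xu\in L^2$ together with boundedness of $\varphi$ from Lemma~\ref{lem: equiv diff int eq} enters), one has $g'=\widehat{-ixu\varphi}$. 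Parseval then gives $\int_{\R}g'\,\overline{\hat\varphi}\,d\xi=-2\pi i\int_{\R}xu(x)|\varphi(x)|^2\,dx$, whose integrand is real-valued; hence the quantity is purely imaginary and its real part is $0$. Combining this with the previous display yields the reduced identity, and multiplying through by $-\lambda=|\lambda|$ recovers \eqref{eq: mystery eq}.

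The main obstacle is the bookkeeping that legitimizes the integration by parts and the Parseval step rather than any deep difficulty: one must know that $g\in H^1$ and decays at infinity (so the boundary terms behave), that $g'\overline g/(\xi-\lambda)\in L^1$ (from $g'\in L^2$, $g\in L^\infty\cap L^2$), and that $xu\varphi\in L^2$ so that $g'=\widehat{-ixu\varphi}$ holds as an $L^2$ identity. Granting these regularity and decay facts, the only genuine insight needed is the reality of $\int_{\R}xu|\varphi|^2\,dx$, which forces the cross term to drop out; this is exactly the new cancellation that powers the identity.
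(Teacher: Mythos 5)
Your proposal is correct and is essentially the paper's own argument in different clothing: the paper works with $\hat\varphi$ and the relation $\widehat{u\varphi}=(\xi-\lambda)\hat\varphi$ on $\mathbb{R}^+$, integrates by parts on the half-line to produce the boundary term $|\hat\varphi(0)|^2$, and kills the cross term $\mathrm{Re}\int\widehat{u\varphi}'\,\overline{\hat\varphi}\,d\xi$ by Plancherel using the reality of $xu|\varphi|^2$ --- exactly the cancellation you identify. The regularity bookkeeping you flag ($g\in H^1$, $g\to 0$, $g'=\widehat{-ixu\varphi}$) matches what the paper verifies, so no gap.
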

\begin{proof}
Since $\varphi$ is an eigenvector of $L_u$, have 
\begin{equation}\frac{1}{i}\partial_x\varphi - C_+(u\varphi) = \lambda \varphi.\end{equation} 
Fourier transform to get
\begin{equation}\widehat{u\varphi}\chi_{\mathbb{R}^+}(\xi) = (\xi-\lambda)\hat{\varphi}.\end{equation}
In other words
\begin{equation}\label{eq: 1}
\widehat{u\varphi} = (\xi-\lambda)\hat{\varphi}
\end{equation}
when $\xi > 0$. Since $xu(x)\in L^2$ and $\varphi \in L^{\infty}$ by Lemma \ref{lem: equiv diff int eq} , from \eqref{eq: 1} we see that $\hat{\varphi}$ is in $H^1$ on $(0,\infty)$, continuous on $[0, \infty)$, and $\hat{\varphi}\to 0$ as $\xi\to \infty$ because $u\varphi \in L^1$.
Consider
\begin{align}
&|\hat{\varphi}|^2 - \frac{\lambda}{2}\frac{d}{d\xi}|\hat{\varphi}|^2\notag\\
= &\frac{1}{2}[(\hat{\varphi}-\lambda\hat{\varphi}')\bar{\hat{\varphi}} +(\bar{\hat{\varphi}}-\lambda\bar{\hat{\varphi}}')\hat{\varphi}]\notag\\
= &\frac{1}{2}[(\widehat{u\varphi}'-\xi\hat{\varphi}')\bar{\hat{\varphi}} +(\overline{\widehat{u\varphi}}'-\xi\bar{\hat{\varphi}}')\hat{\varphi}].
\end{align}
The last equality follows from the derivative of \eqref{eq: 1}. We now integrate, remembering that $\hat{\varphi}$ is supported on $\mathbb{R}^+$:
\begin{align}
&\int_0^{\infty}|\hat{\varphi}|^2~d\xi + \frac{\lambda}{2}|\hat{\varphi}(0)|^2\notag\\ 
= &\frac{1}{2}\int_0^{\infty} [(\widehat{u\varphi}'-\xi\hat{\varphi}')\bar{\hat{\varphi}} +(\overline{\widehat{u\varphi}}'-\xi\bar{\hat{\varphi}}')\hat{\varphi}]~d\xi\notag\\ 
= &\frac{1}{2}\bigg(-\int_0^{\infty}\xi (|\hat{\varphi}|^2)'~d\xi + \int_{\mathbb{R}}(\widehat{u\varphi}'\bar{\hat{\varphi}} + \overline{\widehat{u\varphi}}'\hat{\varphi})~d\xi\bigg).\label{eq: vanishing second term}
\end{align}
We claim that the second term in \eqref{eq: vanishing second term} vanishes. In fact, by Plancherel's identity, 
\begin{align}
& \int_{\mathbb{R}}(\widehat{u\varphi}'\bar{\hat{\varphi}} + \overline{\widehat{u\varphi}}'\hat{\varphi})~d\xi\notag\\
= &\int_{\R}(-ixu(x)\varphi(x)\overline{\varphi(x)} + \overline{-ixu(x)\varphi(x)}\varphi(x))~dx\notag\\
= &~0.
\end{align}
The first term in \eqref{eq: vanishing second term} evaluates to
\begin{equation}\frac{1}{2}\int_0^{\infty}|\hat{\varphi}|^2~d\xi.\end{equation}
Hence we have
\begin{equation}\int_0^{\infty}|\hat{\varphi}|^2~d\xi = -\lambda|\hat{\varphi}(0)|^2.\end{equation}
By \eqref{eq: 1}, $\widehat{u\varphi}(0) = -\lambda\hat{\varphi}(0)$. Therefore
\begin{equation}-\lambda\int_0^{\infty}|\hat{\varphi}|^2~d\xi = |\widehat{u\varphi}(0)|^2,\end{equation}
which by the Plancherel identity is nothing but the claim.
\end{proof}

We are now ready to show 
\begin{prop}\label{prop: simplicity}
Assume $u\in L^2\cap L^{\infty}$ and $xu(x)\in L^2$. The dimension of eigenspace of every eigenvalue of $L_u$ in the discrete spectrum is 1.
\end{prop}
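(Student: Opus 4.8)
The plan is to leverage the identity \eqref{eq: mystery eq} from Lemma \ref{lem: interesting indentity}, which ties the $L^2$ norm of any eigenvector to the squared modulus of its pairing with $u$. The crucial observation is that this pairing, $\varphi \mapsto \int_\R u\varphi\,dx$, is a \emph{linear} functional on the (finite-dimensional) eigenspace, whereas the right-hand side of \eqref{eq: mystery eq} is a strictly positive multiple of $\|\varphi\|_2^2$ whenever $\lambda < 0$. These two facts are in tension as soon as the eigenspace has dimension greater than one, and that tension is exactly what forces simplicity.

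Concretely, I would argue by contradiction. Fix a negative eigenvalue $\lambda$ and suppose its eigenspace $E_\lambda$ has dimension at least $2$. Choose linearly independent eigenvectors $\varphi_1, \varphi_2 \in E_\lambda$ and consider the linear functional $\ell(\varphi) = \int_\R u\varphi\,dx$ restricted to $\mathrm{span}\{\varphi_1,\varphi_2\}$. Since this is a linear functional on a space of dimension $2$, its kernel is nontrivial, so there is a nonzero $\psi \in \mathrm{span}\{\varphi_1,\varphi_2\} \subset E_\lambda$ with $\int_\R u\psi\,dx = 0$. Being a nonzero element of the eigenspace, $\psi$ is itself an eigenvector for $\lambda$, so Lemma \ref{lem: interesting indentity} applies to it and yields
\[
0 = \bigg|\int_\R u\psi\,dx\bigg|^2 = 2\pi|\lambda|\int_\R |\psi|^2\,dx.
\]
Because $\lambda < 0$ we have $|\lambda| > 0$, forcing $\int_\R |\psi|^2\,dx = 0$ and hence $\psi = 0$, contradicting the choice of $\psi$. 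Therefore $\dim E_\lambda = 1$.

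The substance of the argument has already been absorbed into the preceding lemma: once \eqref{eq: mystery eq} is in hand, simplicity is a one-line consequence of the rank--nullity theorem. The only point worth verifying is that Lemma \ref{lem: interesting indentity} genuinely applies to every nonzero vector of the eigenspace, not merely to some distinguished eigenvector; this is immediate, since the hypotheses on $u$ are fixed and any $\psi \in E_\lambda \setminus \{0\}$ satisfies the eigenvalue equation. Consequently I do not anticipate a real obstacle in this proposition itself — the genuinely delicate work is the derivation of the identity \eqref{eq: mystery eq}, which we are taking as given.
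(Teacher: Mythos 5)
Your argument is correct, and it is a genuinely different and substantially shorter route than the paper's. The key point you identify is sound: Lemma \ref{lem: interesting indentity} applies to \emph{every} nonzero element of the eigenspace, since its proof uses only the eigenvalue equation together with the fixed hypotheses on $u$ (and the boundedness of $\varphi$ from Lemma \ref{lem: equiv diff int eq}), so the linear functional $\varphi\mapsto\int_{\R}u\varphi\,dx$ has trivial kernel on the eigenspace, which by rank--nullity forces the dimension to be $1$. The paper, by contrast, uses \eqref{eq: mystery eq} only to guarantee that each basis vector $\phi_j$ can be normalized by $\int_{\R}u\phi_j\,dx=2\pi i\lambda$, and then runs a much longer argument: it expands the resolvent near $\lambda$, writes the Jost solution $W(\zeta)$ as a singular part plus a holomorphic part $H(\zeta)$, passes to the limit $\zeta\to\lambda$ using Lemma \ref{lem: asymp} to obtain \eqref{eq: V sat eq}, and derives the contradiction from the orthogonality of $\mathrm{Ran}(I-T)$ and $\mathrm{Ker}(I-T^{*})$ when $N\neq 1$. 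What that longer computation buys is \eqref{eq: V sat eq} itself: with $N=1$ it is reused verbatim in the subsequent Corollary to produce the phase constant $\gamma$ via $H(\lambda)+1=(x+\gamma)\phi$, which is a needed piece of scattering data for the IST. So your proof establishes simplicity more cleanly, but does not yield that by-product; if one adopts it, the resolvent computation still has to be carried out separately for the Corollary.
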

\begin{proof}
Suppose $\lambda<0$ is an eigenvalue. Denote $G_{\lambda}*(u~\cdot)$ by $T$. One can easily check that $T$ is compact. By Lemma \ref{lem: equiv diff int eq}, $\varphi$ is an eigenvector with eigenvalue $\lambda$ if and only if $\varphi\in \text{Ker}(I-T)$. We denote the dimension of the $\lambda$ eigenspace by $N$. $N$ is certainly finite, either by general spectral theory, or by the fact that $T$ is compact. The idea of the proof is described as follows. We first use Lemma \ref{lem: interesting indentity} to evaluate the singular part of the resolvent of $L_u$ around $\lambda$, and use that with the integral form of the resolvent equation to eventually arrive at \eqref{eq: V sat eq} which roughly says that Ran$(I-T)$ must be suitably``large" if $N$ is greater than one. On the other hand, by directly computation one can see that Ker$(I-T^*)$ must also be suitably ``large". A contradiction will then be formed since Ran$(I-T)$ and Ker$(I-T^*)$ are perpendicular to each other. Let us now provide the details. Pick an orthogonal basis $\{\phi_j\}$ of the eigenspace, and let each $\phi_j$ be normalized so that
\begin{equation}\label{eq: normal} \int_{\mathbb{R}}u\phi_j ~dx = 2\pi i \lambda.\end{equation} 
Note that the left hand side is non-zero by Lemma \ref{lem: interesting indentity}. Let $R(\zeta)$ be the resolvent of $L_u$ at $\zeta$, we have in a neigborhood of $\lambda$ (see \cite{kato1995perturbation}, Chapter 5, Section 3.5):
\begin{equation}R(\zeta) = -\frac{P}{\zeta-\lambda} + h(\zeta).\end{equation}
Here $P$ is projection onto the eigenspace, and $h$ is holomorphic at $\lambda$. Let $W(\zeta)-1 \in H^+\cap H^1$ solve
\begin{equation}\label{eq: 2}
W = 1+G_{\zeta}*(uW).
\end{equation}
$W$ is the Jost solution of Fokas and Ablowtiz (see \cite{fokas1983inverse}).
By Lemma \ref{lem: equiv diff int eq}, Lemma \ref{lem: interesting indentity}, and the normalization of the $\phi_j$'s, we have
\begin{align}
W(\zeta) -1 = &R(\zeta)C_+u \notag\\
= &-\frac{1}{\zeta-\lambda}\sum_{j=1}^N\bigg(\frac{\int_{\mathbb{R}} u \overline{\phi_j}~dx}{\|\phi_j\|}\bigg)\frac{\phi_j}{\|\phi_j\|} + H(\zeta)\notag\\
= &-\frac{1}{\zeta-\lambda}\sum_{j=1}^N\bigg(\frac{-2\pi \lambda \phi_j}{\int_{\mathbb{R}}u\phi_j~dx}\bigg) + H(\zeta)\notag\\
= &\frac{-i}{\zeta-\lambda}\sum_{j=1}^N\phi_j + H(\zeta). \label{eq: W decomp}
\end{align}
Of course $H(\zeta)=h(\zeta)C_+u$. We now plug \eqref{eq: W decomp} into \eqref{eq: 2} to get
\begin{equation}H(\zeta) - \frac{i}{\zeta-\lambda}\sum_{j=1}^N\phi_j =  G_{\zeta}*\bigg(u\big(H(\zeta) - \frac{i}{\zeta-\lambda}\sum_{j=1}^N\phi_j+1\big)\bigg),\end{equation}
\begin{equation}\label{eq: eq before limit}
H(\zeta) =  G_{\zeta}*( u (H(\zeta)+1)) -\frac{i}{\zeta-\lambda}(G_{\zeta}-G_{\lambda})*\bigg(u\sum_{j=1}^N\phi_j\bigg).
\end{equation}
To obtain \eqref{eq: eq before limit} we have used $\phi_j = T\phi_j = G_{\lambda}*(u\phi_j)$. Take the limit as $\zeta\to\lambda$ to get
\begin{equation}\label{eq: limit eq}
H(\lambda) =  G_{\lambda}*\bigg(u\big( H(\lambda) +1 \big)\bigg) +x\sum_{j=1}^N\phi_j - G_{\lambda}*\bigg(u\big(x\sum_{j=1}^N\phi_j\big)\bigg) +\frac{i}{2\pi \lambda}\int_{\mathbb{R}}u\sum_{j=1}^N\phi_j~dx.
\end{equation}
\eqref{eq: limit eq} can be justified as follows. The left hand side of \eqref{eq: eq before limit} certainly converges in $L^2$ since $H(\zeta)$ is holomorphic from $\mathbb{C}$ to $L^2$. It follows that a subsequence converges almost everywhere. Furthermore, it is easy to see that $G_{\zeta}\to G_{\lambda}$ in $L^2$, hence the first term on the right hand side of \eqref{eq: eq before limit} converges uniformly by the fact that $u\in L^2\cap L^{\infty}$. Furthermore, it is not difficult to see that $\frac{G_{\zeta}-G_{\lambda}}{\zeta-\lambda}$ converges in $L^2$ to the following
\begin{equation}
\tilde{G}_{\lambda}(x)=\frac{1}{2\pi}\int_0^{\infty}e^{ix\xi}\frac{1}{(\xi-\lambda)^2}~d\xi.
\end{equation}
Therefore the second term on the right hand side of \eqref{eq: eq before limit} converges uniformly to \begin{equation}-i\tilde{G}_{\lambda}*\left(u\sum_{j=1}^N\phi_j\right).\end{equation} An integration by parts shows \begin{equation}\tilde{G}_{\lambda}(x)=-\frac{1}{2\pi \lambda}+ixG_{\lambda}(x),\end{equation} from which follows the other terms in \eqref{eq: limit eq}.
By the normalization of $\phi_j$ we have
\begin{equation}\label{eq: H lambda}H(\lambda) - x\sum_{j=1}^N\phi_j - G_{\lambda}*\bigg(u\big( H(\lambda)+1 - x\sum_{j=1}^N\phi_j\big)\bigg) =-N.\end{equation}
Equation \eqref{eq: H lambda} can be rewritten as
\begin{equation}\label{eq: V sat eq}
V-G_{\lambda}*(uV) = (1-N)G_{\lambda}*u,
\end{equation}
where 
\begin{equation}\label{eq: V eq}
V = H(\lambda) - x\sum_{j=1}^N\phi_j + N.
\end{equation}
We claim that $V\in L^2$. In fact, $H\in L^2$, while  $\phi_j$ is an eigenfunction, therefore $\phi_j = T\phi_j=G_{\lambda}*(u\phi_j)$. Now $ - x\sum_{j=1}^N\phi_j + N =- x\sum_{j=1}^NG_{\lambda}*(u\phi_j) + N\in L^2$ by Lemma \ref{lem: asymp} and the normalization of $\phi_j$. Recall that $I - G_{\lambda}*(u ~\cdot)$ is denoted by $I-T$. We have
\begin{equation}(1-N)G_{\lambda}*(u)\in \text{Ran}(I-T)\subset \overline{\text{Ran}(I-T)} = \text{Ker}(I-T^*)^{\perp}.\end{equation}
Evidently $T^* = uG_{\lambda}*~\cdot$. It is not hard to see that $\text{Ker}(I-T^*)\supset \text{Span}_{j=1}^N\{u\phi_j\}$. In fact,
\begin{equation}
T^*(u\phi_j) = uG_{\lambda}*(u\phi_j) = uT\phi_j =u\phi_j.
\end{equation}
If $1-N\ne 0$, we must have $G_{\lambda}*(u)\perp u\phi_j$ for every $j$. However 
\begin{align}
&\int_{\R}G_{\lambda}*u(x)\overline{u(x)\phi_j(x)}~dx\notag\\
= &\int_{\R^2}G_{\lambda}(x-y)u(y)\overline{u(x)\phi_j(x)}~dx~dy\notag\\
= &\int_{\R^2}u(y)\overline{G_{\lambda}(y-x)u(x)\phi_j(x)}~dx~dy\notag\\
= &\int_{\R}u(y)\overline{G_{\lambda}*(u\phi_j)(y)}~dy\notag\\
= &\int_{\R}u(y)\overline{\phi_j(y)}~dy\notag\\
= &-2\pi i \lambda \ne 0
\end{align}
by the normalization of $\phi_j$. Hence $1-N$ must be 0.
\end{proof}

As is mentioned before, Proposition \ref{prop: simplicity} is useful in the construction of scattering data in the Fokas and Ablowitz IST (see \cite{fokas1983inverse}). Apart from the location of the eigenvalues, another important piece of scattering data in the IST is the phase constant $\gamma$ in the following Corollary. It serves a similar role as the norming constants in the scattering data for Schr\"odinger operators. We include a discussion of the phase constant here since it follows as an easy consequence of the computations involved in Proposition \ref{prop: simplicity}. 
\begin{cor}
Let $\lambda$ be a negative eigenvalue of $L_u$, $\phi$ be the eigenvector normalized as \eqref{eq: normal} and let $H(\lambda)$ be defined as in \eqref{eq: limit eq}. Then there is a constant $\gamma$ so that
\begin{equation}
H(\lambda)+1 = (x+\gamma)\phi(x).
\end{equation}
\end{cor}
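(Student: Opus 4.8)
The plan is to read the result off directly from the computation already carried out in the proof of Proposition~\ref{prop: simplicity}, now specialized to the case $N=1$ which simplicity guarantees. Recall that there $V$ was defined by \eqref{eq: V eq} as $V = H(\lambda) - x\sum_{j=1}^N\phi_j + N$ and was shown to satisfy \eqref{eq: V sat eq}, namely $V - G_\lambda*(uV) = (1-N)G_\lambda*u$. When the eigenspace is one-dimensional there is a single normalized eigenvector $\phi$, so $V = H(\lambda) - x\phi + 1$ and the right-hand side of \eqref{eq: V sat eq} vanishes identically.

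First I would record the two facts already established inside the proof of Proposition~\ref{prop: simplicity}: that $V \in L^2$, and that with $N=1$ it solves the homogeneous integral equation $V = G_\lambda*(uV)$. Since $G_\lambda$ has Fourier transform $\chi_{\R^+}(\xi)/(\xi-\lambda)$ supported on $[0,\infty)$, the convolution $G_\lambda*(uV)$ automatically lies in $H^+$; and because $u\in L^\infty$ forces $uV\in L^2$, the identity $\hat V(\xi) = \chi_{\R^+}(\xi)\widehat{uV}(\xi)/(\xi-\lambda)$ together with the bound $\xi/(\xi-\lambda)\le 1$ for $\xi\ge 0$, $\lambda<0$ shows $V\in H^+\cap H^1$. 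Thus Lemma~\ref{lem: equiv diff int eq} applies and places $V$ in $\mathrm{Ker}(I-T)$, i.e.\ in the $\lambda$-eigenspace of $L_u$.

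Next I would invoke simplicity: by Proposition~\ref{prop: simplicity} that eigenspace is spanned by $\phi$, so $V = \gamma\phi$ for some scalar $\gamma$. Substituting the definition of $V$ gives $H(\lambda) - x\phi + 1 = \gamma\phi$, which rearranges to exactly $H(\lambda)+1 = (x+\gamma)\phi(x)$, the asserted identity.

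I do not expect a serious obstacle; the corollary is essentially a bookkeeping consequence of setting $N=1$ in \eqref{eq: V sat eq}. The only point requiring a little care is the regularity check that $V$ genuinely lies in the space where Lemma~\ref{lem: equiv diff int eq} identifies kernel elements of $I-T$ with genuine eigenvectors, so that simplicity may be applied to conclude $V$ is a scalar multiple of $\phi$. Once $V$ is known to be an $L^2$ solution of $V = G_\lambda*(uV)$, this membership is immediate from the mapping properties of the convolution on the Fourier side noted above.
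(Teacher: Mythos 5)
Your argument is the same as the paper's: set $N=1$ in \eqref{eq: V sat eq} so that $V$ solves the homogeneous equation $V=G_\lambda*(uV)$, apply Lemma~\ref{lem: equiv diff int eq} to identify $V$ as a $\lambda$-eigenvector, invoke Proposition~\ref{prop: simplicity} to write $V=\gamma\phi$, and substitute back into \eqref{eq: V eq}. The only difference is that you spell out the Fourier-side regularity check that $V\in H^+\cap H^1$, which the paper leaves implicit; the proposal is correct.
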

\begin{proof}
Letting $N=1$ in \eqref{eq: V sat eq}, we see that $V\in \text{Ker }(I-T)$. By Lemma \ref{lem: equiv diff int eq}, this implies that $V$ is an eigenvector of $L_u$ with eigenvalue $\lambda$. By Proposition \ref{prop: simplicity} there is a $\gamma$ so that \begin{equation}\label{eq: V} V = \gamma \phi.\end{equation} Plug \eqref{eq: V} into \eqref{eq: V eq}  and we get the desired result.
\end{proof}

\section{Finiteness of the Discrete Spectrum}\label{sec: fin}

In this section, we establish the fact that $L_u$ has only finitely many negative eigenvalues. In all of the following, $u$ is assumed to satisfy the conditions in Theorem \ref{thm: 2}.

Let us consider the operators 
\begin{equation}\label{def: L lambda u}
L_{\lambda u} = \frac{1}{i}\partial_x - \lambda C_+ u C_+.
\end{equation}
For any $\lambda\in \mathbb{R}$, \eqref{def: L lambda u} is a relatively compact perturbation of $\frac{1}{i}\partial_x$, and is bounded from below. It has essential spectrum $[0,\infty)$ and negative eigenvalues that could only accumulate at 0.
Define \begin{equation}\mu_n(\lambda) = \sup_{\varphi_1,\dots,\varphi_{n-1}}U_{\lambda}(\varphi_1,\dots,\varphi_{n-1}),\end{equation}
where
\begin{equation}U_{\lambda}(\varphi_1,\dots,\varphi_m) = \inf_{\psi \in H^+\cap H^1; \|\psi\|=1,\psi\in [\varphi_1,\dots,\varphi_m]^{\perp}}(\psi,L_{\lambda u}\psi).\end{equation}

One has the max-min principle (See \cite{reed1978methods}, Theorem XIII.1):
\begin{prop}
For each fixed $n$, either
\begin{enumerate}
\item $L_{\lambda u}$ has at least $n$ eigenvalues, and $\mu_n(\lambda)$ is the $n$th eigenvalue, 

or
\item $\mu_n(\lambda)=0$, and there are at most $n-1$ eigenvalues.
\end{enumerate}
\end{prop}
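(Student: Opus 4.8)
The plan is to recognize this as the abstract max-min (Courant--Fischer) principle for a self-adjoint operator bounded below, and to verify that every hypothesis needed to invoke it has already been secured for $L_{\lambda u}$. By Theorem~\ref{thm: 1} (applied with $\lambda u$ in place of $u$) together with the Kato--Rellich argument, $L_{\lambda u}$ is self-adjoint on $H^+\cap H^1$; by the analogue of Lemma~\ref{lem: semi-boundedness} it is bounded from below; and by Weyl's theorem its essential spectrum is $\sigma_{\mathrm{ess}}(\frac{1}{i}\partial_x)=[0,\infty)$, so the bottom of the essential spectrum is $\Sigma=0$. The whole content of the Proposition is then the dichotomy that $\mu_n(\lambda)$ either captures a genuine eigenvalue below $\Sigma$ or else saturates at $\Sigma=0$. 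Although this is Reed--Simon's Theorem~XIII.1, I would prove both alternatives directly from the spectral theorem.

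For the first alternative, suppose there are at least $n$ eigenvalues $e_1\le\cdots\le e_n<0$ below $\Sigma$, counted with multiplicity, with orthonormal eigenvectors $\psi_1,\dots,\psi_n$. To show $\mu_n(\lambda)\ge e_n$, take $\varphi_j=\psi_j$ for $1\le j\le n-1$: any unit $\psi\in[\psi_1,\dots,\psi_{n-1}]^\perp$ has spectral support contained in $[e_n,\infty)$, whence $(\psi,L_{\lambda u}\psi)\ge e_n$ and so $U_\lambda(\psi_1,\dots,\psi_{n-1})\ge e_n$. For the reverse inequality $\mu_n(\lambda)\le e_n$, let $\varphi_1,\dots,\varphi_{n-1}$ be arbitrary; since $\mathrm{span}\{\psi_1,\dots,\psi_n\}$ has dimension $n$ it meets the codimension-$(n-1)$ subspace $[\varphi_1,\dots,\varphi_{n-1}]^\perp$ in a nonzero vector $\psi$, and for such $\psi$ one has $(\psi,L_{\lambda u}\psi)\le e_n\|\psi\|^2$, giving $U_\lambda(\varphi_1,\dots,\varphi_{n-1})\le e_n$ for every choice of the $\varphi_j$. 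Combining, $\mu_n(\lambda)=e_n$.

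For the second alternative, suppose there are at most $n-1$ eigenvalues below $\Sigma=0$, say $e_1,\dots,e_m$ with $m\le n-1$ and eigenvectors $\psi_1,\dots,\psi_m$. Choosing $\varphi_j=\psi_j$ for $j\le m$ (and arbitrary vectors for $m<j\le n-1$), every unit $\psi$ orthogonal to $\psi_1,\dots,\psi_m$ has spectral support in $[0,\infty)$, so $(\psi,L_{\lambda u}\psi)\ge0$ and hence $\mu_n(\lambda)\ge0$. The matching upper bound $\mu_n(\lambda)\le0$ is where the essential spectrum must be exploited: for any prescribed $\varphi_1,\dots,\varphi_{n-1}$ I would take a Weyl sequence for the point $0\in\sigma_{\mathrm{ess}}(L_{\lambda u})$, i.e. unit vectors $w_k$ with $\|L_{\lambda u}w_k\|\to0$ and $w_k\rightharpoonup0$ weakly, project them onto $[\varphi_1,\dots,\varphi_{n-1}]^\perp$, and renormalize; weak convergence to $0$ guarantees the projection corrections are negligible, so the Rayleigh quotients still tend to $0$, forcing $U_\lambda(\varphi_1,\dots,\varphi_{n-1})\le0$ and thus $\mu_n(\lambda)\le0$. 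I expect this Weyl-sequence construction --- producing approximate null-energy states that survive the orthogonality constraints against finitely many fixed vectors --- to be the only genuinely delicate step; the dimension-counting in the first alternative is routine once the spectral theorem is in hand.
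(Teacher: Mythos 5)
The paper offers no proof of this Proposition: it is quoted directly from Reed--Simon, Theorem XIII.1, and the only content specific to the paper is the verification (done elsewhere) that $L_{\lambda u}$ is self-adjoint, bounded below, and has $\sigma_{\mathrm{ess}}(L_{\lambda u})=[0,\infty)$. Your decision to prove the abstract dichotomy from scratch is therefore a more self-contained route, and both halves of your argument follow the standard proof. The first alternative is handled correctly: the two-sided dimension count with the spectral theorem gives $\mu_n(\lambda)=e_n$, and your claim that a vector orthogonal to $\psi_1,\dots,\psi_{n-1}$ has spectral support in $[e_n,\infty)$ is valid because the spectrum below $0$ is purely discrete, so those eigenvectors exhaust the spectral subspace of $(-\infty,e_n)$.

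The one step that does not quite close is the upper bound $\mu_n(\lambda)\le 0$ in the second alternative. The infimum defining $U_\lambda$ is taken over $\psi\in H^+\cap H^1=D(L_{\lambda u})$, while the vectors $\varphi_1,\dots,\varphi_{n-1}$ in the supremum are arbitrary. If some $\varphi_j\notin H^1$, then projecting a Weyl sequence $w_k\in D(L_{\lambda u})$ onto $[\varphi_1,\dots,\varphi_{n-1}]^{\perp}$ produces $w_k-Qw_k$ with $Qw_k\in\mathrm{span}\{\varphi_1,\dots,\varphi_{n-1}\}$, which need not lie in $H^1$; the renormalized vectors are then not admissible test functions, and the cross terms involving $L_{\lambda u}Qw_k$ are not even defined. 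The standard repair: since $0\in\sigma_{\mathrm{ess}}(L_{\lambda u})$, the spectral projection $E([-\delta,\delta])$ has infinite rank for every $\delta>0$, and its range is contained in $D(L_{\lambda u})$ because the spectral parameter is there localized to a bounded set. Choose an $n$-dimensional subspace $W\subset \mathrm{Ran}\, E([-\delta,\delta])$, intersect it with the codimension-$(n-1)$ space $[\varphi_1,\dots,\varphi_{n-1}]^{\perp}$ to obtain a unit vector $\psi\in D(L_{\lambda u})$ with $(\psi,L_{\lambda u}\psi)\le\delta$, and let $\delta\to 0$. This replaces the projection-and-renormalization of a Weyl sequence by the same dimension count you already used in the first alternative, and removes the only delicate point you flagged.
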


We remark here that $\mu_n(0)=0$ obviously. The asserted finiteness of the discrete spectrum of $L_u$ is equivalent to the statement that there are only finitely many $n$ for which $\mu_n(1)<0$. An immediate observation is that we only need to prove the result for positive $u$.

\begin{lemma}
Let $u^+$ be the positive part of $u$. If $L_{u+}$ has a finite discrete spectrum, so does $L_u$.
\end{lemma}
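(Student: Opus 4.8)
The plan is to compare $L_u$ with $L_{u^+}$ at the level of quadratic forms and then invoke the monotonicity built into the max-min principle. First I would write $u = u^+ - u^-$, where $u^- = \max(-u,0)\ge 0$ is the negative part. Since $|u^\pm(x)| \le |u(x)|$ pointwise, both $u^+$ and $u^-$ inherit the hypotheses $u\in L^1\cap L^\infty$ and $xu\in L^2$ from $u$; in particular, by Theorem \ref{thm: 1}, each of $L_u$ and $L_{u^+}$ is self-adjoint on the common domain $H^+\cap H^1$, is bounded below, and has essential spectrum $[0,\infty)$. The key observation is a positivity fact: because $C_+$ acts as the identity on $H^+$, for any $\psi\in H^+\cap H^1$ one has $(\psi, C_+ u^- C_+\psi) = (u^-\psi, \psi) = \int_{\R} u^-|\psi|^2~dx \ge 0$, so $C_+ u^- C_+$ is a nonnegative operator on $H^+$.

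Next I would record the operator identity $L_u = L_{u^+} + C_+ u^- C_+$, which follows at once from $C_+ u C_+ = C_+ u^+ C_+ - C_+ u^- C_+$. Combined with the positivity just noted, this yields the quadratic form inequality
\begin{equation}
(\psi, L_u \psi) = (\psi, L_{u^+}\psi) + \int_{\R} u^-|\psi|^2~dx \ge (\psi, L_{u^+}\psi)
\end{equation}
for every $\psi\in H^+\cap H^1$.

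The finiteness then follows from the max-min principle. Let $\mu_n$ and $\mu_n^+$ denote the values produced by the principle for $L_u$ and $L_{u^+}$ respectively, both computed over the same test space $H^+\cap H^1$. Since the form inequality holds for every admissible $\psi$, taking the infimum over $\psi$ orthogonal to a fixed $(n-1)$-tuple and then the supremum over all such tuples preserves the inequality, so $\mu_n \ge \mu_n^+$ for each $n$. By hypothesis $L_{u^+}$ has only finitely many negative eigenvalues, so there is an $N$ with $\mu_n^+ = 0$ for all $n\ge N$. For such $n$ we get $0 = \mu_n^+ \le \mu_n \le 0$, the last inequality because each $\mu_n$ is either a negative eigenvalue or $0$. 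Hence $\mu_n = 0$ for all $n\ge N$, which by the max-min principle means $L_u$ has at most $N-1$ negative eigenvalues.

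The only point requiring care is ensuring that the two operators share the same form domain, so that the max-min values can be compared term by term; this is exactly what Theorem \ref{thm: 1} delivers, since $u^+$ obeys the same hypotheses as $u$. The positivity of $C_+ u^- C_+$ is the crux of the argument, and it is immediate once one uses that $C_+$ fixes $H^+$, so I do not expect a serious obstacle here — the whole point is to reduce the finiteness question to the sign-definite case, where the later Birman–Schwinger-type bound can be applied.
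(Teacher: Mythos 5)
Your proof is correct and follows essentially the same route as the paper, which simply asserts $\mu_n^+(1)\le\mu_n(1)$ as obvious from the definition and then applies the max-min principle exactly as you do. The only difference is that you spell out the reason for that inequality --- the positivity of $C_+u^-C_+$ on $H^+$ --- which is a worthwhile detail but not a new idea.
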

\begin{proof}
We denote the corresponding max-min expression for $L_{u^+}$ by $\mu_n^+(1)$. By its definition it is obvious that $\mu_n^+(1)\le \mu_n(1)$. Therefore if $\mu_N^+(1)=0$ for some $N$, $\mu_N(1)\ge 0$. By the max-min principle, $\mu_N(1) = 0$, and the number of eigenvalues of $L_u$ is at most $N-1$.
\end{proof}

From now on we always assume $u\ge 0$.

\begin{lemma}\label{lem: montonicity}
For any given $n$, $\mu_n(\lambda)$ is continuous for $\lambda \in \mathbb{R}$, decreasing on $[0,\infty)$ and is strictly decreasing once it becomes negative.
\end{lemma}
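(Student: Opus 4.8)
The plan is to exploit the fact that, for each fixed unit vector $\psi$, the quadratic form $(\psi, L_{\lambda u}\psi)$ is an affine function of $\lambda$. Indeed, since $\psi \in H^+$ gives $C_+\psi = \psi$, one has
\begin{equation}
(\psi, L_{\lambda u}\psi) = \left(\psi, \tfrac{1}{i}\partial_x\psi\right) - \lambda\int_{\R} u|\psi|^2~dx,
\end{equation}
and because $u \ge 0$ the slope $-\int_{\R} u|\psi|^2~dx$ lies in $[-\|u\|_{\infty}, 0]$ whenever $\|\psi\| = 1$. Continuity and monotonicity of $\mu_n$ follow rather formally from this single identity, so the real substance of the lemma is the strict decrease. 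Throughout I will freely use the equivalence of the max-min expression with the min-max expression $\mu_n(\lambda) = \inf_{\dim M = n}\sup_{\psi\in M,\|\psi\|=1}(\psi, L_{\lambda u}\psi)$ (see \cite{reed1978methods}), since the min-max form is better suited to producing upper bounds via trial subspaces.

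For monotonicity, note that when $0\le\lambda_1\le\lambda_2$ the identity above gives $(\psi, L_{\lambda_2 u}\psi)\le(\psi, L_{\lambda_1 u}\psi)$ for every admissible $\psi$; taking the infimum over unit $\psi$ in a fixed orthogonal complement and then the supremum over the constraints preserves the inequality, so $\mu_n(\lambda_2)\le\mu_n(\lambda_1)$. For continuity, the same identity yields $|(\psi, L_{\lambda u}\psi) - (\psi, L_{\lambda' u}\psi)|\le\|u\|_{\infty}|\lambda-\lambda'|$ uniformly over unit $\psi$, and since passing to infima and suprema changes the value by at most this uniform bound, $\mu_n$ is Lipschitz with constant $\|u\|_{\infty}$ on all of $\R$, hence continuous.

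For the strict decrease, I would argue as follows. Suppose $\mu_n(\lambda_0) < 0$ for some $\lambda_0\ge0$. By case (1) of the max-min principle, $\mu_n(\lambda_0)$ is the $n$-th eigenvalue of $L_{\lambda_0 u}$; I choose orthonormal eigenvectors $\psi_1,\dots,\psi_n$ for its $n$ lowest eigenvalues $\mu_1(\lambda_0)\le\cdots\le\mu_n(\lambda_0)<0$ and set $M = \mathrm{span}\{\psi_1,\dots,\psi_n\}$, so that $(\psi, L_{\lambda_0 u}\psi)\le\mu_n(\lambda_0)$ for every unit $\psi\in M$. Using $M$ as a trial subspace in the min-max formula, for $\lambda>\lambda_0$ I obtain
\begin{equation}
\mu_n(\lambda)\le\sup_{\psi\in M,\|\psi\|=1}(\psi, L_{\lambda u}\psi)\le\mu_n(\lambda_0) - (\lambda-\lambda_0)\,\delta,\qquad \delta := \min_{\psi\in M,\|\psi\|=1}\int_{\R} u|\psi|^2~dx .
\end{equation}

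Everything thus reduces to showing $\delta>0$, which I expect to be the main obstacle. The minimum is attained since $\{\psi\in M:\|\psi\|=1\}$ is compact and $\psi\mapsto\int_{\R} u|\psi|^2~dx$ is continuous. Were $\delta = 0$, some unit $\psi\in M$ would satisfy $\int_{\R} u|\psi|^2~dx = 0$, and then the affine identity would force $(\psi, L_{\lambda_0 u}\psi) = (\psi, \tfrac1i\partial_x\psi)\ge0$, because $\tfrac1i\partial_x$ is nonnegative on $H^+$ (its symbol $\xi$ is nonnegative on the support $[0,\infty)$ of $\hat{\psi}$). This contradicts $(\psi, L_{\lambda_0 u}\psi)\le\mu_n(\lambda_0)<0$. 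Hence $\delta>0$, so $\mu_n(\lambda)<\mu_n(\lambda_0)$ for every $\lambda>\lambda_0$, which is precisely strict monotonicity on the set where $\mu_n$ is negative.
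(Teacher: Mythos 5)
Your proof is correct, and the continuity and monotonicity parts coincide with the paper's (the paper phrases the affine dependence as $(\psi,L_{\lambda u}\psi)-(\psi,L_{\lambda'u}\psi)=(\lambda'-\lambda)(uC_+\psi,C_+\psi)$, which for $\psi\in H^+$ is exactly your identity). The strict-decrease step, however, is argued by a genuinely different route. The paper stays entirely inside the max-min expression as defined: for each choice of constraint vectors it picks an $\epsilon$-near-minimizer $\tilde\psi$, uses $(\tilde\psi,\tfrac1i\partial_x\tilde\psi)\ge 0$ to deduce the lower bound $(uC_+\tilde\psi,C_+\tilde\psi)\ge-\tfrac{1}{\lambda'}(\mu_n(\lambda')+\epsilon)$, and then, with $\epsilon$ tuned to $\lambda$ and $\lambda'$, arrives at the explicit quantitative estimate $\mu_n(\lambda)\le\mu_n(\lambda')-\tfrac{\lambda-\lambda'}{2\lambda'}(-\mu_n(\lambda'))$, whose rate depends only on $\mu_n(\lambda')$ and not on any eigenfunctions. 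You instead pass to the dual min-max formulation, feed in the span $M$ of the first $n$ eigenvectors of $L_{\lambda_0 u}$ (available by case (1) of the max-min principle since $\mu_n(\lambda_0)<0$), and obtain strict decrease from the positivity of $\delta=\min_{\psi\in M,\,\|\psi\|=1}\int_{\R} u|\psi|^2\,dx$, proved by compactness of the unit sphere of the finite-dimensional $M$ together with the same nonnegativity of $\tfrac1i\partial_x$ on $H^+$. Your argument is cleaner and more conceptual, at the cost of invoking the min-max/max-min equivalence (only the easy direction is actually needed: any $n$-dimensional trial subspace of the domain gives an upper bound, since $M\cap[\varphi_1,\dots,\varphi_{n-1}]^\perp\ne\{0\}$ by a dimension count) and the existence of the eigenvectors; also your constant $\delta$ depends on those eigenvectors, whereas the paper's decay rate is explicit in $\mu_n(\lambda')$ alone. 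Both arguments deliver what the rest of Section 3 uses, namely strict monotonicity of $\mu_n$ once it becomes negative.
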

\begin{proof}
To get continuity of the $\mu_n$'s, one only needs to observe
\begin{equation}|(\psi,L_{\lambda u}\psi)-(\psi,L_{\lambda ' u}\psi)|\le |\lambda-\lambda'|\|u\|_{\infty}\|\psi\|_2^2.\end{equation}
For monotinicity, one just needs to see that
\begin{equation}(\psi,L_{\lambda u}\psi)-(\psi,L_{\lambda ' u}\psi) = (\lambda'-\lambda)(uC_+\psi,C_+\psi).\end{equation}
Let $\lambda>\lambda'>0$. If $\mu_n(\lambda')<0$, then $U_{\lambda'}(\varphi_1,\dots,\varphi_n)\le \mu_n(\lambda')<0$. For every choice of $\varphi_1,\dots,\varphi_n$, denote $S$ the set of allowed $\psi$'s. There exists a $\tilde{\psi}\in S$ so that \begin{equation}(\tilde{\psi},L_{\lambda ' u}\tilde{\psi})\le \inf_{\psi\in S}(\psi,L_{\lambda ' u}\psi)+\epsilon,\end{equation} where $\epsilon < \frac{\lambda-\lambda'}{2\lambda}(-\mu_n(\lambda'))$ is a chosen positive number. Notice that \begin{equation}-\lambda'(uC_+\tilde{\psi},C_+\tilde{\psi})\le (\tilde{\psi},L_{\lambda ' u}\tilde{\psi}),\end{equation}
therefore
\begin{equation}(uC_+\tilde{\psi},C_+\tilde{\psi})\ge -\frac{1}{\lambda'}(\tilde{\psi},L_{\lambda ' u}\tilde{\psi})\ge -\frac{1}{\lambda'}(\inf_{\psi\in S}(\psi,L_{\lambda ' u}\psi)+\epsilon)\ge -\frac{1}{\lambda'}(\mu_n(\lambda')+\epsilon).\end{equation}
Hence we have 
\begin{align}
&\inf_{\psi \in S}(\psi,L_{\lambda u}\psi)-\inf_{\psi \in S}(\psi,L_{\lambda ' u}\psi)\notag\\
\le &(\tilde{\psi},L_{\lambda u}\tilde{\psi})-(\tilde{\psi},L_{\lambda ' u}\tilde{\psi})+\epsilon\notag\\
\le &(\lambda'-\lambda)(uC_+\tilde{\psi},C_+\tilde{\psi})+\epsilon\notag\\
\le &\frac{\lambda-\lambda'}{\lambda'}(\mu_n(\lambda')+\epsilon)+\epsilon\notag\\
\le &\frac{\lambda-\lambda'}{\lambda'}\left(\mu_n(\lambda')+\frac{\lambda}{\lambda-\lambda'}\epsilon\right)\notag\\
\le &\frac{\lambda-\lambda'}{2\lambda'}\mu_n(\lambda'). \label{ineq: last}
\end{align}
The last inequality \eqref{ineq: last} follows from the choice of $\epsilon$. Therefore \begin{equation}U_{\lambda}(\varphi_1,\dots,\varphi_n)\le U_{\lambda'}(\varphi_1,\dots,\varphi_n)- \frac{\lambda-\lambda'}{2\lambda'}(-\mu_n(\lambda'))\end{equation} for every choice of $\varphi_1, \dots, \varphi_n$, and so
\begin{equation}\mu_n(\lambda)\le \mu_n(\lambda')- \frac{\lambda-\lambda'}{2\lambda'}(-\mu_n(\lambda')).\end{equation}
In other words, $\mu_n(\lambda)$ is strictly decreasing on $(0,\infty)$ once it becomes negative.
\end{proof}

By Lemma \ref{lem: montonicity}, for any $-E<0$, $\mu_n(1)<-E$ if and only if $\mu_n(\lambda)=-E$ for some $\lambda \in (0,1)$. By the max-min principle $-E$ is an eigenvalue of $L_{\lambda u}$:
\begin{equation}\frac{1}{i}\label{eq: e-value switch} \varphi_x -\lambda C_+(u\varphi) = -E\varphi, \quad \text{for some }\varphi \in D(L_{\lambda u}).\end{equation}
By Lemma \ref{lem: equiv diff int eq}, \eqref{eq: e-value switch} is equivalent to
\begin{equation}\varphi = \lambda G_{-E}*(u\varphi)\end{equation}
or
\begin{equation}\frac{1}{\lambda}\sqrt{u}\varphi = \sqrt{u}G_{-E}*(\sqrt{u}\cdot \sqrt{u}\varphi).\end{equation}
In other words, $\frac{1}{\lambda}$ is an eigenvalue of the operator 
\begin{equation}
K_{-E} = \sqrt{u}G_{-E}*(\sqrt{u}~\cdot)
\end{equation}
defined on $L^2$. The argument in Section \ref{sec: simp} shows that the negative eigenvalues of $L_{\lambda u}$ are all simple, therefore the $\mu_n(\lambda)$'s don't split or cross, and different $n$ gives rise to different $\frac{1}{\lambda}$ as eigenvalues of $K_{-E}$. Therefore to prove finiteness of the discrete spectrum of $L_u$, we just need to give a uniform bound on the number of eigenvalues greater than 1 of $K_{-E}$ as $E$ approaches 0. One way to show this, as is done for the Birman-Schwinger bound of Schr\"odinger operators, is to show that the Hilbert-Schmidt norm of $K_{-E}$ is uniformly bounded as $E$ approaches 0 (see \cite{reed1978methods}). Unfortunately it is not the case here. It turns out that the Hilbert-Schmidt norm of $K_{-E}$ blows up as $E$ approaches 0. What is actually happening is that one of the eigenvalues grows without bound as $E$ approaches 0. If one carefully peels off the effect of this single eigenvalue, the rest is seen to be bounded. The situation very much resembles the case for one and two dimensional Schr\"odinger operators, which was studied in \cite{simon1976bound}\footnote{The author would like to thank Peter Perry of University of Kentucky for pointing this out.}.

We write
\begin{equation}G_{-E}(x) = \int_{0}^{\infty}\frac{e^{ix\xi}}{\xi+E}~d\xi = \int_0^{\infty}\frac{e^{ix\xi}-\chi(\xi)}{\xi+E}~d\xi + \int_0^{\infty}\frac{\chi(\xi)}{\xi +E}~d\xi = N_{-E}(x)+R(E).\end{equation}
Here $\chi(\xi)$ is a compactly supported non-negative smooth cutoff function which is identically equal to 1 when $|\xi|\le 1$, has support on $|\xi|\le 2$, and is always between 0 and 1. Then
\begin{equation}
K_{-E} = M_{-E} + L_{-E},
\end{equation}
where $M_{-E} = \sqrt{u}N_{-E}*(\sqrt{u}~\cdot)$ and $L_{-E} = \sqrt{u}R(E)(\sqrt{u},\cdot)$. 

\begin{lemma}\label{lem: HS bound}
$M_{-E}$ is Hilbert-Schmidt for every $E\ge 0$ and for $E$ small enough $\|M_{-E}-M_{0}\|_{HS}\le CE^{\epsilon}$ for some $C>0$ and $\epsilon>0$.
\end{lemma}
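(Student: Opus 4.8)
The plan is to reduce both claims to a single real integral in the difference variable and then to a sharp uniform bound on the convolution kernel $N_{-E}$. Since $u\ge 0$, the Hilbert--Schmidt norm factors through the autocorrelation $\rho(z) = \int_{\R} u(y)u(y+z)\,dy\ge 0$, namely
\[\|M_{-E}\|_{HS}^2 = \iint_{\R^2} u(x)u(y)|N_{-E}(x-y)|^2\,dx\,dy = \int_{\R} |N_{-E}(z)|^2\,\rho(z)\,dz,\]
and similarly with $N_{-E}$ replaced by $N_{-E}-N_0$ for the difference. First I would record the properties of $\rho$ that are needed: $\rho\in L^1\cap L^\infty$ with $\|\rho\|_1 = \|u\|_1^2$ and $\|\rho\|_\infty\le\|u\|_2^2$, and, crucially, the fractional moment bound $\int_{\R}|z|^\delta\rho(z)\,dz<\infty$ for every $\delta\in[0,\tfrac12)$. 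The last follows from $\iint u(x)u(y)|x-y|^\delta\,dx\,dy\le 2\|u\|_1\int|x|^\delta u(x)\,dx$ together with $\int_{|x|>1}|x|^\delta u = \int_{|x|>1}|x|^{\delta-1}(|x|u)\le\|xu\|_2\,\||x|^{\delta-1}\|_{L^2(|x|>1)}<\infty$, which is exactly where the hypothesis $xu\in L^2$ enters. In particular $\int(1+|\log|z||)^2\rho(z)\,dz<\infty$.

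The heart of the argument is the uniform bound $|N_{-E}(z)|\le C(1+|\log|z||)$ valid for all $z\ne 0$ and all $E\ge 0$. I would obtain it from the scaling identity $G_{-E}(z) = G_{-1}(Ez)$ (substitute $\xi = E\sigma$), the elementary estimate $R(E) = \log(1/E) + O(1)$ as $E\to 0$, and the standard behavior of the fixed function $G_{-1}$: $G_{-1}(w) = -\log|w| + O(1)$ for $|w|\le 1$ and $G_{-1}(w) = O(1)$ for $|w|\ge 1$. Writing $N_{-E}(z) = G_{-1}(Ez) - R(E)$ and splitting at $|z| = 1/E$, the divergent $\log(1/E)$ produced by $G_{-1}(Ez)$ in the regime $|Ez|\le 1$ cancels exactly against the $\log(1/E)$ in $R(E)$, leaving $N_{-E}(z) = -\log|z| + O(1)$ there; in the regime $|Ez|>1$ the bounded term $G_{-1}(Ez)$ leaves $|N_{-E}(z)|\le C(1 + \log(1/E))\le C(1 + \log|z|)$. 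Combining this with the first paragraph gives $\|M_{-E}\|_{HS}^2\le C\int(1+|\log|z||)^2\rho\,dz<\infty$ uniformly in $E\ge 0$, which proves $M_{-E}$, and in particular $M_0$, is Hilbert--Schmidt.

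For the quantitative difference bound I would work from the representation $N_{-E}(z) - N_0(z) = -E\int_0^\infty\frac{e^{iz\xi}-\chi(\xi)}{\xi(\xi+E)}\,d\xi$, which yields a pointwise estimate of the form $|N_{-E}(z)-N_0(z)|\le CE|z|\bigl(1+|\log E|+\log(1+|z|)\bigr)$ for $|z|\ge 1$ and $\le CE|\log E|$ for $|z|\le 1$. Then I would split $\int|N_{-E}-N_0|^2\rho\,dz$ at a threshold $|z| = T := E^{-\theta}$ with $\theta\in(0,1)$: on $\{|z|\le T\}$ the pointwise difference bound and $\|\rho\|_1<\infty$ contribute $\lesssim(E^{1-\theta}|\log E|)^2$, while on $\{|z|>T\}$ the uniform bound of the previous paragraph together with the fractional moment estimate $\int_{|z|>T}(1+|\log|z||)^2\rho\lesssim T^{-\beta}$ contributes $\lesssim E^{\theta\beta}$. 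Optimizing $\theta$ then gives $\|M_{-E}-M_0\|_{HS}^2\le CE^{2\epsilon}$ for some $\epsilon>0$.

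I expect the main obstacle to be the uniform logarithmic bound of the second paragraph: one must show that the logarithmic divergences of $G_{-E}(z)$, which live near $z=0$, and of the constant $R(E)$ cancel uniformly in $z$ across the entire transition region $|z|\sim 1/E$, rather than merely for each fixed $z$. The scaling identity $G_{-E}(z) = G_{-1}(Ez)$ is what makes this cancellation transparent and is the conceptual core of the estimate. A secondary technical point is the calibration of the moment exponent $\delta<\tfrac12$ so that the logarithmic growth of $N_0$ at infinity remains integrable against $\rho$; this is precisely the place where the full strength of the hypothesis $xu\in L^2$ is consumed.
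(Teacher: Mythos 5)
Your argument is correct, but it reaches the conclusion by a genuinely different route from the paper, and it is worth recording where the routes diverge. (i) To integrate the logarithmic kernel you pass to the autocorrelation $\rho(z)=\int u(y)u(y+z)\,dy$ and use the fractional moments $\int|z|^\delta\rho\,dz<\infty$ for $\delta<\tfrac12$ (correctly extracted from $xu\in L^2$ via Cauchy--Schwarz on $|x|>1$); the paper instead keeps the double integral and applies Cauchy--Schwarz against the $L^2$ weight $(1+|x|)u(x)$, reducing matters to the explicit finiteness of $\iint(\log|x-y|)^4(1+|x|)^{-2}(1+|y|)^{-2}\,dx\,dy$. (ii) Your central estimate $|N_{-E}(z)|\le C(1+|\log|z||)$ uniformly in $E$ comes from the scaling identity $G_{-E}(z)=G_{-1}(Ez)$ together with $R(E)=\log(1/E)+O(1)$, the leading logarithms cancelling across the transition region $|z|\sim 1/E$; the paper proves only the $E=0$ instance of this bound, by direct term-by-term estimates on the three pieces of $N_0$, and never needs the uniform version. (iii) For the difference, the paper derives the sharper pointwise bound $|N_{-E}(x)-N_0(x)|\le C(E+E^\epsilon(1+|x|^\epsilon))$, whose square integrates directly against $u(x)u(y)$ by the same moment considerations; your cruder bound $\lesssim E|z|(1+|\log E|+\log(1+|z|))$ is \emph{not} integrable against $\rho$ (it would require a second moment of $u$), so you must, and do, compensate by splitting at $|z|=E^{-\theta}$, using the uniform logarithmic bound of (ii) together with the fractional-moment tail estimate $\int_{|z|>T}(1+\log|z|)^2\rho\,dz\lesssim T^{-\beta}$ on the far region. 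Both schemes close and give $\|M_{-E}-M_0\|_{HS}\le CE^\epsilon$: yours buys a conceptually transparent, $E$-uniform control of the kernel at the price of an extra interpolation step, while the paper's buys a one-line integration at the price of a more delicate pointwise estimate. One small repair: the decomposition $N_{-E}=G_{-1}(E\,\cdot)-R(E)$ degenerates at $E=0$ (both terms are infinite there), so the $E=0$ case of your uniform bound must be obtained separately, e.g.\ by the direct estimate on $N_0$ that you in any case need for the tail region, or by letting $E\to0$.
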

\begin{proof}
The computation of the Hilbert-Schmidt norm of $M_{-E}$ as well as $M_{-E}-M_0$ will depend on some pointwise estimates of $N_{-E}(x)$. In fact, we will first find a pointwise estimate for $N_0(x)$, and use it to show that $M_0$ has bounded Hilbert-Schmidt norm. Following that we will find a pointwise estimate for $N_{-E}(x) - N_0(x)$ and show boundedness of the Hilbert Schmidt norm of $M_{-E}-M_0$. Let us write $N_0(x)=\int_0^{\infty}\frac{e^{ix\xi}-\chi(\xi)}{\xi}~d\xi$ as
\begin{equation}\int_0^1 \frac{e^{ix\xi}-1}{\xi}~d\xi + \int_1^2 \frac{e^{ix\xi}-\chi(\xi)}{\xi}~d\xi+\int_2^{\infty} \frac{e^{ix\xi}}{\xi}~d\xi =I + II + III.\end{equation}
We estimate the three terms one by one.
\begin{equation}\label{eq: int split} I = \int_0^x\frac{e^{i\xi}-1}{\xi}~d\xi = \int_0^{\pm 1}\frac{e^{i\xi}-1}{\xi}~d\xi + \int_{\pm 1}^x\frac{e^{i\xi}-1}{\xi}~d\xi\end{equation}
The choice of the $\pm$ sign in front of 1 is to agree with the sign of $x$. The first integral in \eqref{eq: int split} is finite and the second integral is bounded by
\begin{equation}\left|\int_1^{|x|}\frac{2}{\xi}~d\xi\right|=2|\log |x||.\end{equation}
$II$ is easily seen to be bounded since $|e^{ix\xi}-\chi(\xi)|\le 2$.
We will only show estimate for $III$ when $x>0$. The case when $x<0$ is similar.
\begin{equation}III = \frac{1}{ix}\frac{e^{ix\xi}}{\xi}\bigg|_2^{\infty} +\frac{1}{ix}\int_2^{\infty}\frac{e^{ix\xi}}{\xi^2}~d\xi\end{equation}
is apparently bounded when $x>1$. When $0<x\le1$, write $III$ as
\begin{equation}\label{eq: int split 2} \int_{2x}^{\infty}\frac{e^{i\xi}}{\xi}~d\xi = \int_{2}^{\infty}\frac{e^{i\xi}}{\xi}~d\xi + \int_{2x}^2\frac{e^{i\xi}}{\xi}~d\xi, \end{equation}
whereas the first term in \eqref{eq: int split 2} is finite and the second term is bounded by 
\begin{equation}\int_{2x}^2 \frac{1}{\xi}~d\xi = |\log |x||.\end{equation}
In summary, we have shown that $|N_0(x)|\le C(1+|\log |x||)$ for all $x\ne 0$ for some uniform constant $C$, therefore $|N_0(x)|^2 \le C(1+(\log|x|)^2)$ for some other constant $C$. Let us now compute the Hilbert-Schmidt norm of $M_0$:
\begin{align*}
&\|M_0\|_{HS}^2\\
= &\int_{\R^2}u(x)u(y)|N_0(x-y)|^2~dx~dy\\
\le &C\int_{\R^2}u(x)u(y)(1+(\log|x-y|)^2)~dx~dy.\numberthis
\end{align*} 
Since $u\in L^1$, we only need to compute
\begin{align*}
&\int_{\R^2}u(x)u(y)(\log|x-y|)^2~dx~dy\\
= &\int_{\R^2}(1+|x|)u(x)(1+|y|)u(y)\frac{(\log|x-y|)^2}{(1+|x|)(1+|y|)}~dx~dy. \numberthis \label{eq: int finite}
\end{align*}
Since $u(x)$ and $xu(x)$ are both in $L^2$, we see that in order for \eqref{eq: int finite} to be finite, one just needs
\begin{equation}\int_{\R^2}\frac{(\log|x-y|)^4}{(1+|x|)^2(1+|y|)^2}~dx~dy\end{equation}
to be finite. We break up this integral into two pieces: one on $|x-y|\le 1$ and one on $|x-y|\ge 1$.
\begin{align*}
&\int_{|x-y|\le 1}\frac{(\log|x-y|)^4}{(1+|x|)^2(1+|y|)^2}~dx~dy\\
= &\int_{|z|\le 1}\frac{(\log|z|)^4}{(1+|x|)^2(1+|x-z|)^2}~dx~dz\\
\le &\int_{-\infty}^{\infty} \frac{1}{(1+|x|)^2}\int_{-1}^1(\log|z|)^4~dz~dx\\
< &\infty. \numberthis
\end{align*}
\begin{align*}
&\int_{|x-y|\ge 1}\frac{(\log|x-y|)^4}{(1+|x|)^2(1+|y|)^2}~dx~dy\\
\le &\int_{|x-y|\ge 1}\frac{(\log(1+|x-y|))^4}{(1+|x|)^2(1+|y|)^2}~dx~dy\\
\le &\int_{|x-y|\ge 1}\frac{(\log(1+|x|)+\log(1+|y|))^4}{(1+|x|)^2(1+|y|)^2}~dx~dy\\
\le &C\int_{\R^2}\frac{(\log(1+|x|))^4+(\log(1+|y|))^4}{(1+|x|)^2(1+|y|)^2}~dx~dy\\
< &\infty, \numberthis
\end{align*}
where we have used the elementary inequality $1+|x-y|\le (1+|x|)(1+|y|)$.
Next we find a similar estimate on $M_{-E}-M_0$. In fact,
\begin{align*}
&N_{-E}(x)-N_0(x) \numberthis\\
= &\int_0^1 (e^{ix\xi}-1)\left(\frac{1}{\xi+E}-\frac{1}{\xi}\right)~d\xi + \int_1^2 (e^{ix\xi}-\chi(\xi))\left(\frac{1}{\xi+E}-\frac{1}{\xi}\right)~d\xi+\int_2^{\infty} e^{ix\xi}\left(\frac{1}{\xi+E}-\frac{1}{\xi}\right)~d\xi\\
= &I + II + III. 
\end{align*}
In the following estimates, we work with the assumption $x>0$, and the $x<0$ case will be similar.
\begin{equation}I = \int_0^x  (e^{i\xi}-1)\left(\frac{1}{\xi+Ex}-\frac{1}{\xi}\right)~d\xi.\end{equation}
When $0<x<1$, $I$ is bounded by
\begin{equation}\int_0^x \xi\left(\frac{1}{\xi}-\frac{1}{\xi+Ex}\right)~d\xi = Ex\log\frac{1+E}{E}\le C(E+E^{\epsilon})\end{equation}
for some uniform constant $C$ and some small $\epsilon>0$.
When $x>1$,
\begin{equation}I = \int_0^1(e^{i\xi}-1)\left(\frac{1}{\xi+Ex}-\frac{1}{\xi}\right)~d\xi + \int_1^x(e^{i\xi}-1)\left(\frac{1}{\xi+Ex}-\frac{1}{\xi}\right)~d\xi\end{equation}
The first term is bounded as before, and the second term is bounded by
\begin{equation}\int_1^x 2\left(\frac{1}{\xi}-\frac{1}{\xi+Ex}\right)~d\xi=2\log\left(\frac{1+Ex}{1+E}\right)\le C(E^{\epsilon}|x|^{\epsilon}+E)\end{equation}
for some small $\epsilon>0$.
$II$ is bounded by
\begin{equation}\int_1^2 2\left(\frac{1}{\xi}-\frac{1}{\xi+E}\right)~d\xi = 2\log\frac{2+2E}{2+E}\le CE.\end{equation}
$III$ is bounded by
\begin{equation}\int_2^{\infty}\left(\frac{1}{\xi}-\frac{1}{\xi+E}\right)~d\xi = \log\frac{2+E}{2}\le CE.\end{equation}
In summary, we have shown that $|N_{-E}(x)-N_0(x)|\le C(E+E^{\epsilon}(1+|x|^{\epsilon}))$.
The calculation of the Hilbert-Schmidt norm of $M_{-E}-M_0$ is similar as before. One just needs to let $|x|^{\epsilon}$ with a small $\epsilon$ replace $|\log |x||$.
\end{proof}

From now on we assume $0<E<E_0$ and $0<\lambda<\lambda_0$. The size of $E_0$ and $\lambda_0$ will be determined in the following argument. 

\begin{lemma}\label{lem: eigen eq}
$\frac{1}{\lambda}$ is an eigenvalue of $K_{-E}$ if and only if 
\begin{equation}\label{eq: sub1}
\lambda R(E)(\sqrt{u},(1-\lambda M_{-E})^{-1}\sqrt{u})=1.
\end{equation}
\end{lemma}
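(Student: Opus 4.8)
The plan is to exploit the fact that $L_{-E}$ is a rank-one operator. Since $R(E)=\int_0^{\infty}\frac{\chi(\xi)}{\xi+E}\,d\xi$ is a scalar, one has $L_{-E}\psi = R(E)(\sqrt{u},\psi)\sqrt{u}$, so the eigenvalue problem for $K_{-E}=M_{-E}+L_{-E}$ is a rank-one perturbation of $M_{-E}$ and can be handled by a Sherman--Morrison type manipulation. Before doing so I would fix $\lambda_0$ so that $1-\lambda M_{-E}$ is invertible for all $0<\lambda<\lambda_0$ and $0<E<E_0$: by Lemma \ref{lem: HS bound}, $\|M_{-E}\|_{HS}\le \|M_0\|_{HS}+CE^{\epsilon}$ is bounded uniformly in $E$ for small $E$, so choosing $\lambda_0$ with $\lambda_0\sup_{0<E<E_0}\|M_{-E}\|<1$ makes the Neumann series for $(1-\lambda M_{-E})^{-1}$ converge.

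For the forward direction, suppose $K_{-E}\psi=\frac{1}{\lambda}\psi$ with $\psi\ne 0$. Multiplying by $\lambda$ and isolating the $M_{-E}$ term gives $(1-\lambda M_{-E})\psi=\lambda R(E)(\sqrt{u},\psi)\sqrt{u}$; applying $(1-\lambda M_{-E})^{-1}$ yields $\psi=\lambda R(E)(\sqrt{u},\psi)(1-\lambda M_{-E})^{-1}\sqrt{u}$. Pairing this identity with $\sqrt{u}$ produces $(\sqrt{u},\psi)=\lambda R(E)(\sqrt{u},\psi)(\sqrt{u},(1-\lambda M_{-E})^{-1}\sqrt{u})$. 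The crucial point is that $(\sqrt{u},\psi)\ne 0$: if it vanished, the preceding display would force $\psi=0$, contradicting that $\psi$ is an eigenvector. Dividing by $(\sqrt{u},\psi)$ then gives exactly \eqref{eq: sub1}.

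For the converse, assuming \eqref{eq: sub1}, I would set $\psi=(1-\lambda M_{-E})^{-1}\sqrt{u}$, which is nonzero since $\sqrt{u}\ne 0$. Then $(1-\lambda M_{-E})\psi=\sqrt{u}$, so $\lambda M_{-E}\psi=\psi-\sqrt{u}$, while $\lambda L_{-E}\psi=\lambda R(E)(\sqrt{u},\psi)\sqrt{u}=\sqrt{u}$ precisely by \eqref{eq: sub1}. Adding these shows $\lambda K_{-E}\psi=\psi$, that is, $\frac{1}{\lambda}$ is an eigenvalue of $K_{-E}$. The computation is otherwise routine; the only genuine obstacle is guaranteeing invertibility of $1-\lambda M_{-E}$, which is why the uniform Hilbert--Schmidt bound of Lemma \ref{lem: HS bound} together with the restriction to small $\lambda$ and $E$ is needed, alongside the nonvanishing of $(\sqrt{u},\psi)$ that legitimizes the division in the forward direction.
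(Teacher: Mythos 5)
Your proof is correct, but it takes a genuinely different route from the paper. The paper follows Simon's argument verbatim: it factors $1-\lambda K_{-E}=(1-\lambda M_{-E})\bigl(1-(1-\lambda M_{-E})^{-1}\lambda L_{-E}\bigr)$, passes through the regularized determinant $\text{det}_2(1-\lambda K_{-E})=0$, and then uses the fact that for a rank-one operator $A$ one has $\det(1-A)=1-\text{tr}(A)$ to land on \eqref{eq: sub1}. You instead run a direct Sherman--Morrison computation on the eigenvector: from $(1-\lambda M_{-E})\psi=\lambda R(E)(\sqrt{u},\psi)\sqrt{u}$ you solve for $\psi$, pair with $\sqrt{u}$, and observe that $(\sqrt{u},\psi)\ne 0$ (else $\psi=0$); conversely you exhibit $\psi=(1-\lambda M_{-E})^{-1}\sqrt{u}$ as an explicit eigenvector. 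Both arguments hinge on exactly the same analytic input --- invertibility of $1-\lambda M_{-E}$ uniformly in $E$ for $\lambda$ small, supplied by Lemma \ref{lem: HS bound} --- which you correctly isolate as the only genuine obstacle. What your version buys is elementarity and completeness: it avoids invoking the theory of $\text{det}_2$ for Hilbert--Schmidt operators entirely, and it proves both implications of the ``if and only if'' explicitly, whereas the paper's write-up only spells out the forward direction and leaves the reversibility of the determinant identities implicit. What the determinant approach buys is generality: it would extend with no extra work to a finite-rank (rather than rank-one) singular part $L_{-E}$, where the trace condition becomes a finite-dimensional determinant condition, and it keeps the proof aligned with the Birman--Schwinger literature it is imported from.
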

\begin{proof}
This argument is carried over essentially unchanged from \cite{simon1976bound}. If $\frac{1}{\lambda}$ is an eigenvalue of $K_{-E}$, then $\lambda K_{-E}$ has 1 as an eigenvalue and therefore
\begin{equation}\text{det}_2(1-\lambda K_{-E})=0.\end{equation}
Here det$_2$ for a Hilbert-Schmidt operator $A$ is defined as in \cite{simon1976bound}:
\begin{equation}\text{det}_2(1+A) = \text{det}((1+A)e^{-A}).\end{equation}
It follows from 
\begin{equation}1-\lambda K_{-E} = 1-\lambda M_{-E} - \lambda L_{-E} = (1-\lambda M_{-E})\big(1-(1-\lambda M_{-E})^{-1}\lambda L_{-E}\big)\end{equation}
that
\begin{equation}\text{det}\big(1-(1-\lambda M_{-E})^{-1}\lambda L_{-E}\big)=0.\end{equation}
The invertibility of $1-\lambda M_{-E}$ for small $\lambda$ and $E$ follows from Lemma \ref{lem: HS bound}. Here of course we assume $\lambda_0$ has been chosen small in order to make the norm of $\lambda M_{-E}$ small. Since $L_{-E}$ has rank one, 
\begin{equation}\text{tr}\big((1-\lambda M_{-E})^{-1}\lambda L_{-E}\big)=1\end{equation}
or
\begin{equation}\lambda R(E)(\sqrt{u},(1-\lambda M_{-E})^{-1}\sqrt{u})=1.\end{equation}
\end{proof}

\begin{lemma}\label{lem: unique sol lambda}
There are $E_0>0$ and $\lambda_0>0$ such that for each $E\in (0, E_0)$, there exists a unique $\lambda \in (0,\lambda_0]$ solving \eqref{eq: sub1}.
\end{lemma}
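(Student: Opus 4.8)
The plan is to regard the left-hand side of \eqref{eq: sub1} as a function of $\lambda$ for each fixed small $E$, and to show it rises strictly from $0$ through the value $1$ exactly once on $(0,\lambda_0]$. Write
\[\Phi(\lambda,E) = \big(\sqrt{u},(1-\lambda M_{-E})^{-1}\sqrt{u}\big), \qquad F(\lambda,E) = \lambda\, R(E)\,\Phi(\lambda,E),\]
so that \eqref{eq: sub1} becomes $F(\lambda,E)=1$. Two facts drive the argument. Since $\chi\equiv 1$ on $[0,1]$,
\[R(E) = \int_0^\infty \frac{\chi(\xi)}{\xi+E}\,d\xi = \log\frac{1+E}{E} + \int_1^2\frac{\chi(\xi)}{\xi+E}\,d\xi,\]
so $R(E)$ is strictly decreasing in $E>0$ and $R(E)\to+\infty$ as $E\to 0^+$; this logarithmic blow-up is the precise mechanism behind the single eigenvalue that grows without bound. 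Moreover, because $\chi$ and $\xi+E$ are real one has $\overline{N_{-E}(-x)}=N_{-E}(x)$, hence $M_{-E}$ is self-adjoint, and since $R(E)$ is real, $\Phi(\lambda,E)$ is real for every real $\lambda$.

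Next I would fix the constants. By Lemma \ref{lem: HS bound} there are $E_1>0$ and $B<\infty$ with $\|M_{-E}\|\le\|M_{-E}\|_{HS}\le B$ for all $E\in[0,E_1]$. Choosing $\lambda_0<1/(4B)$ forces $\|\lambda M_{-E}\|\le 1/4$ on $[0,\lambda_0]\times[0,E_1]$, so $1-\lambda M_{-E}$ is invertible with $\|(1-\lambda M_{-E})^{-1}\|\le 4/3$, and a Neumann-series estimate gives $\tfrac23\|u\|_1\le\Phi(\lambda,E)\le\tfrac43\|u\|_1$ (note $\Phi(0,E)=\|\sqrt{u}\|_2^2=\|u\|_1$, using $u\ge 0$). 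Differentiating, $\partial_\lambda\Phi=(\sqrt{u}, M_{-E}(1-\lambda M_{-E})^{-2}\sqrt{u})$, which is real and bounded in modulus by $\tfrac{16}{9}B\|u\|_1$; a short computation then yields $\Phi+\lambda\partial_\lambda\Phi\ge\tfrac23\|u\|_1-\lambda_0\tfrac{16}{9}B\|u\|_1\ge\tfrac29\|u\|_1>0$, so $\partial_\lambda F = R(E)\big(\Phi+\lambda\partial_\lambda\Phi\big)>0$ on all of $[0,\lambda_0]$. Thus $F(\cdot,E)$ is strictly increasing, which delivers uniqueness at once.

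It remains to push $F$ past $1$. Since $F(0,E)=0$ and $F(\lambda_0,E)\ge \lambda_0 R(E)\tfrac23\|u\|_1$, and since $R(E)\to\infty$ as $E\to 0^+$, I would pick $E_0\in(0,E_1]$ so small that $\lambda_0 R(E_0)\tfrac23\|u\|_1>1$; monotonicity of $R$ then gives $F(\lambda_0,E)>1$ for every $E\in(0,E_0)$. For each such $E$, the intermediate value theorem combined with strict monotonicity produces a unique $\lambda\in(0,\lambda_0)\subset(0,\lambda_0]$ solving \eqref{eq: sub1}.

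The main obstacle is the strict monotonicity of $F$ in $\lambda$: because $M_{-E}$ is not a positive operator, $\partial_\lambda\Phi$ carries no definite sign, and one must shrink $\lambda_0$—using the uniform Hilbert--Schmidt bound of Lemma \ref{lem: HS bound} and the self-adjointness of $M_{-E}$—until the term $\lambda\,\partial_\lambda\Phi$ is dominated by $\Phi\approx\|u\|_1$. The blow-up $R(E)\to\infty$ is exactly what makes existence still possible even though $\lambda_0$ is forced to be small.
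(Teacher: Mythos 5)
Your proof is correct, and it takes a genuinely different route from the paper's. The paper rearranges \eqref{eq: sub1} into the fixed--point form $\lambda = \frac{1}{\int u}\big(\frac{1}{R(E)} -\lambda ^2 (\sqrt{u},M_{-E}\sqrt{u}) - \lambda^3 B(\lambda,E)\big)$ and applies the contraction mapping principle on $[0,\lambda_0]$, using $R(E)\to+\infty$ only to make the constant term small; this rearrangement is then reused verbatim right after the lemma to expand $\frac{1}{\lambda^2}$ in \eqref{eq: 3}, which is what that formulation buys. You instead keep the equation as $F(\lambda,E)=1$ with $F=\lambda R(E)\Phi$, prove strict monotonicity of $F$ in $\lambda$ from the resolvent derivative $\partial_\lambda(1-\lambda M_{-E})^{-1}=M_{-E}(1-\lambda M_{-E})^{-2}$, the self-adjointness of $M_{-E}$ (correctly deduced from $\overline{N_{-E}(-x)}=N_{-E}(x)$, which the paper also records), and the uniform Hilbert--Schmidt bound of Lemma \ref{lem: HS bound}; existence then follows from the intermediate value theorem and the logarithmic blow-up of $R(E)$, which you make explicit. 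Both arguments rest on the same quantitative inputs ($\|\lambda M_{-E}\|$ small on $[0,\lambda_0]\times[0,E_0]$ and $R(E)\to\infty$), but yours is somewhat more robust: it exhibits the solution as the unique crossing point of a strictly increasing real-valued function and thereby sidesteps the need to check that the paper's iteration map actually sends $[0,\lambda_0]$ into itself, a point requiring a little care since the correction terms $-\lambda^2(\sqrt{u},M_{-E}\sqrt{u})-\lambda^3 B$ have no definite sign. Your additional observations (the formula $R(E)=\log\frac{1+E}{E}+\int_1^2\frac{\chi(\xi)}{\xi+E}\,d\xi$ and the strict monotonicity of $R$) go slightly beyond what the paper states and are used legitimately to propagate $F(\lambda_0,E)>1$ from $E_0$ to all smaller $E$.
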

\begin{proof}
Rewrite \eqref{eq: sub1} as
\begin{equation}\frac{1}{R(E)} = \lambda (\sqrt{u},\sqrt{u}) + \lambda ^2 (\sqrt{u},M_{-E}\sqrt{u}) + \lambda^3 B(\lambda,E)\end{equation} 
and furthermore as
\begin{equation}\label{eq: sub2}
\lambda = \frac{1}{\int u}\bigg(\frac{1}{R(E)} -\lambda ^2 (\sqrt{u},M_{-E}\sqrt{u}) - \lambda^3 B(\lambda,E)\bigg)
\end{equation}
where $B(\lambda, E) = \sum_{n=0}^{\infty}\lambda^n(\sqrt{u},M_{-E}^{n+2}\sqrt{u})$. Here we have assumed that $u$ is not indentically zero, hence $\int u >0$. 
Obviously $R(E)\to +\infty$ as $E\to 0$. Hence by choosing $E_0, \lambda_0$ small, and fixing $E\in (0,E_0)$, the right hand side of \eqref{eq: sub2} is a continuous contraction from $[0,\lambda_0]$ to itself, therefore has a unique fixed point. Finally $\lambda=0$ is evidently not the solution.
\end{proof}

From Lemma \ref{lem: eigen eq} and Lemma \ref{lem: unique sol lambda} we see that for $E<E_0$, $K_{-E}$ has exactly one eigenvalue in $\left[\frac{1}{\lambda_0},\infty\right)$. We estimate the size of $\frac{1}{\lambda}$ as follows. From \eqref{eq: sub2} we get
\begin{equation}\frac{1}{\lambda} = R(E)\bigg(\int u +\lambda(\sqrt{u},M_{-E}\sqrt{u})+O(\lambda^2)\bigg).\end{equation}
Therefore
\begin{equation}\lambda R(E) = \frac{1}{\int u}(1+O(\lambda)).\end{equation}
Hence
\begin{align}
\frac{1}{\lambda^2} = &R^2(E)\bigg(\int u + \lambda(\sqrt{u},M_{-E}\sqrt{u}) + O(\lambda^2)\bigg)^2 \notag \\
=& R^2(E) \bigg(\int u\bigg)^2 + 2R^2(E)\lambda \bigg(\int u\bigg)(\sqrt{u}, M_{-E}\sqrt{u}) + O(1)\notag \\
= &R^2(E) \bigg(\int u\bigg)^2 + 2R(E)(\sqrt{u}, M_{-E}\sqrt{u}) + O(1).\label{eq: 3}
\end{align}

We are now ready to prove
\begin{prop}\label{prop: bounded no eigen}
For $E\in (0,E_0)$, the number of eigenvalues of $K_{-E}$ that are greater than 1 is uniformly bounded.
\end{prop}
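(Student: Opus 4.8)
The plan is to exploit the self-adjoint rank-one splitting $K_{-E}=M_{-E}+L_{-E}$ together with the max-min principle, so that the one eigenvalue that escapes to $+\infty$ as $E\to 0$ (the unique eigenvalue in $[1/\lambda_0,\infty)$ isolated by Lemma \ref{lem: unique sol lambda}) is absorbed into a single rank-one correction, while the entire remainder is controlled by the uniformly bounded Hilbert--Schmidt norm of $M_{-E}$. This directly realizes the phrase ``peel off the effect of this single eigenvalue.''

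First I would record that $K_{-E}$ is a compact self-adjoint operator on $L^2$. Its integral kernel $\sqrt{u(x)}\,G_{-E}(x-y)\,\sqrt{u(y)}$ satisfies $K(x,y)=\overline{K(y,x)}$ because $\overline{G_{-E}(-x)}=G_{-E}(x)$, and $K_{-E}$ is Hilbert--Schmidt since $M_{-E}$ is (Lemma \ref{lem: HS bound}) and $L_{-E}$ has rank one. Moreover $L_{-E}\psi=R(E)(\sqrt{u},\psi)\sqrt{u}$ is a \emph{positive} rank-one operator: $R(E)>0$ and $(\psi,L_{-E}\psi)=R(E)|(\sqrt{u},\psi)|^2\ge 0$, with $\sqrt{u}\in L^2$ since $u\in L^1$.

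Next I would invoke the max-min characterization of the eigenvalues of a compact self-adjoint operator (as in \cite{reed1978methods}). Writing the positive eigenvalues of such an operator $A$ in decreasing order as $\lambda_1(A)\ge\lambda_2(A)\ge\cdots$, the fact that $L_{-E}\ge 0$ has rank one yields the interlacing bound $\lambda_{n+1}(K_{-E})\le\lambda_n(M_{-E})\le\lambda_n(K_{-E})$ for every $n$. The right inequality is immediate from $L_{-E}\ge 0$; the left one follows by evaluating $\lambda_{n+1}(K_{-E})$ over the vectors orthogonal both to the top $n-1$ eigenvectors of $M_{-E}$ and to $\sqrt{u}$, on which $L_{-E}$ annihilates, so that there $(\psi,K_{-E}\psi)=(\psi,M_{-E}\psi)\le\lambda_n(M_{-E})$. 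Consequently, if $K_{-E}$ has $N_K$ eigenvalues strictly larger than $1$, then $M_{-E}$ has at least $N_K-1$ of them, i.e.\ $N_K\le N_M+1$, where $N_M$ denotes the number of eigenvalues of $M_{-E}$ exceeding $1$.

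Finally I would bound $N_M$ by the Hilbert--Schmidt norm: since $\|M_{-E}\|_{HS}^2=\sum_j\lambda_j(M_{-E})^2$ over all eigenvalues and each eigenvalue exceeding $1$ contributes more than $1$ to this sum, one gets $N_M\le\|M_{-E}\|_{HS}^2$. By Lemma \ref{lem: HS bound}, $\|M_{-E}\|_{HS}\le\|M_0\|_{HS}+CE^{\epsilon}\le\|M_0\|_{HS}+CE_0^{\epsilon}$ for $E\in(0,E_0)$, so $N_M$, and hence $N_K\le N_M+1$, is bounded uniformly in $E$. The crux of the argument is the rank-one interlacing step: it is the only place where the singular, $E$-dependent part $L_{-E}$, whose operator norm $R(E)$ blows up as $E\to 0$, must be cleanly separated from the genuinely bounded part $M_{-E}$, and the bookkeeping of the max-min principle must be arranged so that precisely one eigenvalue is absorbed by the rank-one term. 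Everything else reduces to the already-established uniform Hilbert--Schmidt bound of Lemma \ref{lem: HS bound}.
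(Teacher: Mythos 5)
Your proof is correct, but it takes a genuinely different route from the paper's. The paper keeps the full operator $K_{-E}$ and uses the identity $\sum_\mu \mu^2 = \|K_{-E}\|_{HS}^2$, bounding the count of eigenvalues exceeding $1$ by $1+\|K_{-E}\|_{HS}^2 - \lambda^{-2}$, where $\lambda^{-1}$ is the single large eigenvalue produced by Lemmas \ref{lem: eigen eq} and \ref{lem: unique sol lambda}; it then shows $\|K_{-E}\|_{HS}^2-\lambda^{-2}=O(1)$ by inserting the asymptotic expansion \eqref{eq: 3} of $\lambda^{-2}$ and watching the divergent terms $R(E)^2(\int u)^2$ and $2R(E)\int\!\!\int u(x)u(y)N_{-E}(x-y)\,dx\,dy$ cancel exactly against the corresponding pieces of $\|K_{-E}\|_{HS}^2$, leaving only the $N_{-E}$ contribution controlled by Lemma \ref{lem: HS bound}. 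You instead never touch $\|K_{-E}\|_{HS}$ at all: you observe that $L_{-E}$ is a \emph{positive} rank-one operator and apply the standard rank-one interlacing consequence of the max-min principle, $\lambda_{n+1}(K_{-E})\le\lambda_n(M_{-E})\le\lambda_n(K_{-E})$, so that $N_K\le N_M+1\le \|M_{-E}\|_{HS}^2+1$, which Lemma \ref{lem: HS bound} bounds uniformly. Your interlacing step is correctly executed (the test subspace orthogonal to the top $n-1$ eigenvectors of $M_{-E}$ and to $\sqrt{u}$ is exactly right, and positivity of $L_{-E}$ follows from $R(E)>0$ and $u\ge 0$), and the min-max formalism for compact self-adjoint operators with $0$ in the essential spectrum causes no trouble since all eigenvalues being counted exceed $1>0$. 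What your route buys is economy and robustness: it bypasses Lemmas \ref{lem: eigen eq} and \ref{lem: unique sol lambda} and the delicate cancellation computation entirely, needing only that the singular part of $G_{-E}$ is a positive constant (hence a positive rank-one perturbation) and that the regular part has uniformly bounded Hilbert--Schmidt norm. What the paper's route buys is quantitative information about the large eigenvalue itself, namely the expansion $\lambda^{-1}=R(E)\int u+O(R(E)\lambda)$, which identifies precisely how the divergence arises; but for the statement of the proposition your argument is complete and arguably cleaner.
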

\begin{proof}
Since $K_{-E}$ is clearly self-adjoint, one has $\sum_{\mu \text{ eigenvalue of }K_{-E}}\mu^2 = \|K_{-E}\|_{HS}^2$. By Lemma \ref{lem: eigen eq} and Lemma \ref{lem: unique sol lambda}, all of the eigenvalues of $K_{-E}$ except one are less than $\frac{1}{\lambda_0}$. Hence the number of eigenvalues greater than 1 is bounded by
\begin{equation}1+ \bigg(\sum_{\mu \text{ eigenvalue of }K_{-E}}\mu^2\bigg) - \frac{1}{\lambda^2} = 1+\|K_{-E}\|_{HS}^2 - \frac{1}{\lambda^2},\end{equation}
where $\frac{1}{\lambda}$ is the unique eigenvalue exceeding $\frac{1}{\lambda_0}$.
By \eqref{eq: 3}
\begin{align*}
&\|K_{-E}\|_{HS}^2- \frac{1}{\lambda^2}\numberthis\\
= ~& \|K_{-E}\|_{HS}^2 - R^2(E) \bigg(\int u\bigg)^2 - 2R(E)(\sqrt{u}, M_{-E}\sqrt{u}) + O(1)\\
= ~& \int_{\mathbb{R}^2}u(x)u(y)|G_{-E}(x-y)|^2~dx~dy - R^2(E) \bigg(\int u\bigg)^2 - 2R(E)(\sqrt{u}, \sqrt{u}N_{-E}*(\sqrt{u}\sqrt{u})) + O(1)\\
= ~& \int_{\mathbb{R}^2}u(x)u(y)R(E)^2~dx~dy + 2R(E)\int_{\mathbb{R}^2}u(x)u(y)N_{-E}(x-y)~dx~dy \\
 & + \int_{\mathbb{R}^2}u(x)u(y)|N_{-E}(x-y)|^2~dx~dy- R^2(E) \bigg(\int u\bigg)^2 \\
 & - 2R(E)\int_{\mathbb{R}^2}u(x)u(y)N_{-E}(x-y)~dx~dy + O(1)\\
= ~&\int_{\mathbb{R}^2}u(x)u(y)|N_{-E}(x-y)|^2~dx~dy + O(1)\\
= ~&O(1). 
\end{align*}
In the midst of these calculations, we have used $G_{-E}(x)=R(E) + N_{-E}(x)$ and the fact that
$\overline{N_{-E}(x)} = N_{-E}(-x)$.
For the last step, we need Lemma \ref{lem: HS bound}.
\end{proof}

By the discussion above, the number of eigenvalues greater than $1$ of $K_{-E}$ bounds the number of eigenvalues of $L_u$ that are greater than $-E$. Proposition \ref{prop: bounded no eigen} therefore implies that the discrete spectrum of $L_u$ is finite.


\bibliography{BObiblio}   

\end{document}